
\documentclass[reqno,12pt, centertags]{amsart}
\usepackage[left=1in,right=1in,bottom=1in,top=1in]{geometry}
\usepackage{amsmath,amsthm,amscd,amssymb,color,eucal,enumitem,latexsym,lscape,graphicx,multirow,mathrsfs,mathtools}
%%%%%%%%%%%%%%%%%%%%%%%%%%%%%%%%%%
%HERE you TURN ON/OFF the tags for eqs., refs., etc.%
%\usepackage{showkeys}
%%%%%%%%%%%%%%%%%%%%%%%%%%%%%%%%%%%
\mathtoolsset{showonlyrefs}

\usepackage{tikz}
\usepackage{amsmath}
\usepackage{verbatim}
\usetikzlibrary{arrows,shapes}

\newcommand{\bbox}[1]{\vspace{20pt}\fbox{\parbox{450pt}{{\bf #1}}}\vspace{20pt}}
\makeatletter
\def\theequation{\@arabic\c@equation}

\def\namedlabel#1#2{\begingroup
	#2%
	\def\@currentlabel{#2}%
	\phantomsection\label{#1}\endgroup
}

\newcommand{\tA}{\widetilde{A}}

\newcommand{\gaD}{\gamma_{{}_D}}
\newcommand{\gr}{{\text{graph}}}

\newcommand{\gaN}{\gamma_{{}_N}}
\newcommand{\tN}{\tau_{{}_N}}
\newcommand{\Om}{\Omega}
\newcommand{\dOm}{{\partial\Omega}}

\newcommand{\Sp}{\operatorname{Spec}}
\newcommand{\e}{\hbox{\rm e}}
\newcommand{\bd}{\hbox{\rm d}}
\newcommand{\bp}{{\mathbf{p}}}
\newcommand{\bq}{{\mathbf{q}}}

\newcommand{\bbN}{{\mathbb{N}}}
\newcommand{\bbR}{{\mathbb{R}}}

\newcommand{\bbZ}{{\mathbb{Z}}}

\newcommand{\C}{{\mathbb{C}}}

\newcommand{\bbK}{{\mathbb{K}}}

\newcommand{\bbC}{{\mathbb{C}}}

\newcommand{\cA}{{\mathcal A}}
\newcommand{\cB}{{\mathcal B}}

\newcommand{\cD}{{\mathcal D}}
\newcommand{\cE}{{\mathcal E}}
\newcommand{\cg}{{\mathcal G}}
\newcommand{\cG}{{\mathcal F}}
\newcommand{\cH}{{\mathcal H}}
\newcommand{\cI}{{\mathcal I}}
\newcommand{\cJ}{{\mathcal J}}
\newcommand{\cK}{{\mathcal K}}
\newcommand{\cL}{{\mathcal L}}
\newcommand{\cM}{{\mathcal M}}

\newcommand{\cO}{{\mathcal O}}
\newcommand{\cP}{{\mathcal P_{t_0}}}

\newcommand{\cS}{{\mathcal S}}

\newcommand{\cU}{{\mathcal U}}
\newcommand{\cV}{{\mathcal V}}
\newcommand{\cW}{{\mathcal W}}
\newcommand{\cX}{{\mathcal X}}
\newcommand{\cY}{{\mathcal Y}}

\newcommand{\bfi}{{\bf i}}

\newcommand{\no}{\nonumber}
\newcommand{\lb}{\label}

\newcommand{\ol}{\overline}

\newcommand{\wti}{\widetilde  }

\newcommand{\dL}{\prescript{d\!}{}L}

\newcommand{\ran}{\text{\rm{ran}}}

\newcommand{\hatt}{\widehat}

\newcommand{\dist}{\operatorname{dist}}

\newcommand{\rnohs}{\rangle_{H^{-1/2}(\partial\Omega)}}
\newcommand{\lnoh}{_{H^{1/2}(\partial\Omega)}\langle}

\newcommand{\gd}{\widehat{\gamma}_{{}_D}}
\newcommand{\gn}{\widehat{\gamma}_{{}_N}}

\newcommand{\bndr}{H^{{1}/{2}}(\partial \Omega)\times H^{-{1}/{2}}(\partial \Omega)}

\newcommand{\bndra}{\mathfrak{H}\times\mathfrak{H}}

%\renewcommand{\labelenumi}{(\roman{enumi})}
%itemize numbering style

\numberwithin{equation}{section}
\newcommand{\sel}[1]{{\color{blue} Selim:\ \bf{#1}}}
\renewcommand{\div}{\operatorname{div}}
\renewcommand{\det}{\operatorname{det}}

\newcommand{\dom}{\operatorname{dom}}

\newcommand{\tr}{\mathrm{T}}%{\operatorname{\digamma}}%{\operatorname{\mathfrak{t}\mathfrak{r}}}

\newcommand{\spec}{\operatorname{Spec}}

\renewcommand{\Re}{\operatorname{Re }}
\renewcommand\Im{\operatorname{Im}}
\renewcommand{\ker}{\operatorname{ker}}

\newtheorem{theorem}{Theorem}[section]
\newtheorem{hypothesis}[theorem]{Hypothesis}
\newtheorem{lemma}[theorem]{Lemma}
\newtheorem{corollary}[theorem]{Corollary}
\newtheorem{proposition}[theorem]{Proposition}

\theoremstyle{definition}
\newtheorem{definition}[theorem]{Definition}

\newtheorem{example}[theorem]{Example}

\newtheorem{remark}[theorem]{Remark}

\begin{document}

\begin{abstract}In this paper we develop certain aspects of perturbation theory for self-adjoint operators subject to small variations of their domains. We use the abstract theory of boundary triplets to quantify such perturbations and give the second order asymptotic analysis for resolvents, spectral projections, discrete eigenvalues of the corresponding self-adjoint operators. In particular,  we derive explicit formulas for the first variation and the Hessian of the eigenvalue curves bifurcating from a discrete eigenvalue of an unperturbed operator. An application is given to a matrix valued Robin Laplacian and more general Robin-type self-adjoint extensions.    
\end{abstract}

\allowdisplaybreaks

\title[Resolvent expansions]{Resolvent expansions for self-adjoint operators via boundary triplets
}
\author[Y. Latushkin]{Yuri Latushkin}
\address{Department of Mathematics,
The University of Missouri, Columbia, MO 65211, USA}
\email{latushkiny@missouri.edu}
\author[S. Sukhtaiev]{Selim Sukhtaiev	}
\address{Department of Mathematics and Statistics,
Auburn University, Auburn, AL 36849, USA}
\email{szs0266@auburn.edu}
\date{\today}
\keywords{}
\thanks{This work naturally emerged from \cite{LSHadamard} and \cite{BCE} as the result numerous stimulating discussions with Prof. G. Berkolaiko. We thank him for generously sharing his ideas as well as his notes on related topics. We wish to thank Prof. P. Kuchment for pointing out many references. Y.L. was supported by the NSF grants  DMS-1710989 and DMS-2106157, and would like to thank the Courant Institute of Mathematical Sciences and especially Prof.\ Lai-Sang Young for the opportunity to visit CIMS}
\maketitle

\section{Introduction}

This paper concerns asymptotic perturbation theory for self-adjoint extensions of symmetric operators. Our main goal is to derive  explicit second order Taylor expansions for resolvents, spectral projections, and eigenvalue curves for one-parameter  families of self-ajdoint extensions of a given symmetric operator. The type of admissible perturbations covers, in particular, variation of boundary conditions for quantum graph operators and, more generally, variation of domain of self-adjointness for extensions of abstract symmetric operators. Such non-additive perturbations arise, for example, in the geometric spectral theory, in which case ad-hoc methods are used to relate, say, the Hessian of parameter-dependent eigenvalues to the reduced Dirichlet-to-Neumann map, see, for example, recent manuscripts \cite{BCCL}, \cite{MR3961373},  \cite{BCE}, \cite{BHMPS}. We provide a complete operator theoretic treatment for the second order expansion problem which, in particular, yields an abstract version of the reduced Dirichlet-to-Neumann map and its relation to the Hessian of eigenvalue curves. The first and second order expansion formulas go back to Hadamard, Rayleigh, and Rellich and have been closely studied subsequently, cf. e.g. \cite{MR3412445, MR2336430, MR2720607, MR2251558, MR2160744}

The first and second order asymptotic expansions discussed in this paper are also central in the Maslov index theory, cf.\  \cite{BBZ18,RoSa95}, for partial differential operators on varying domains or subject to varying boundary conditions, a subject that recently attracted much attention, cf.\ \cite{CJLS,CJM1,CJM2,DJ11,LSHadamard,LS17} and the bibliography therein. For example, Deng--Jones's \cite{DJ11} foundational approach to the spectral count via Maslov index for Robin Laplacian essentially reduces to finding the sign of the derivative of the eigenvalue curves, see recent papers \cite{CJLS, LS17}. Further developments of this approach, see \cite{CJM2}, led to elaborate spectral flow formulas and provided new tools in geometric analysis of Laplace eigenfunctions. 

We stress that the perturbation theory developed here covers not merely ``additive'' perturbations of a given self-adjoint operator but rather a ``structural'' perturbation of the dense subspace on which the perturbed operator is defined. More specifically, we focus on the second order Taylor expansion for resolvents, spectral projections, and eigenvalue curves of $C^2$-regular one parameter families of self-adjoint extensions of a given symmetric operator in abstract Hilbert spaces. We use the abstract theory of boundary triples and Krein--Naimark-type resolvent formulas, a subject having rich and distinguished history \cite{Behrndt_2020, GG, Schm}.  Among applications is an explicit formula for the Hessian and the first derivative of an eigenvalue curve of a one dimensional Schr\"odinger operator with matrix-valued potential subject to varying Robin boundary conditions on a bounded interval.

To give this work yet another perspective, let us mention some closely related papers.   In \cite{CJM1, CJM2}, G. Cox, C. Jones, J. Marzuola utilized explicit formulas for the first derivative of eigenvalue curves of parameter dependent Robin-type elliptic operators to evaluate the spectral shift between the Dirichlet and Neumann realizations in terms of the Morse index of the Dirichlet-to-Neumann map, which yields a plethora of interesting insights in spectral geometry, e.g., a novel formula for the number of nodal domains of Dirichlet eigenfunctions, and a geometric proof Friedlander's inequality, \cite{Fri91}. Two more interesting connections are to a recent paper by G. Berkolaiko, P. Kuchment \cite{BCE}, where the Hessian is computed for simple eigenvalues in the case of ``additive'' perturbations, and to the work of G. Berkolaiko, Y. Canzani, G. Cox, J. L. Marzuola \cite{BCCL}, where the Hessian is computed for dispersion relations (i.e. eigenvalue surfaces parameterized by quasimomenta which arise in the Floquet-Bloch decomposition) of some periodic discrete graphs. It is worth noting that in some of these works, e.g. in \cite{BCCL} the parameter is higher-dimensional while we focus on one-parameter families. That said, we believe that the methods developed in the present paper could be applicable in multi-parameter setting. 

This work should be considered as a sequel to \cite{LSHadamard} where a symplectic form of the Krein resolvent formula was derived in order to establish the first order expansion for resolvents and eigenvalues of self-adjoint extensions. We stress that the abstract setting considered in \cite{LSHadamard} is significantly more general than the one assumed in the current work but the expansion carried out in \cite{LSHadamard} is only to the first order. The major difference is in the type of trace maps used in \cite{LSHadamard} where they are assumed to be merely densely defined and with dense range while in the current work the traces are bounded (in particular, have full domain) and surjective. In short, in \cite{LSHadamard}, $(\mathfrak H, \Gamma_0, \Gamma_1)$ is not required to be a classical boundary triplet, \cite{Behrndt_2020,GG,Schm}, but it is in the current manuscript. 

Let $\cH, \mathfrak{H}$ be complex Hilbert spaces. Let $A$ be a densely defined, closed, symmetric operator acting in $\cH$ and having equal (possibly infinite) deficiency indices. Let $(\Gamma_0, \Gamma_1, \mathfrak H)$ be a boundary triplet associated with $A$, see Hypothesis \ref{hyp3.6}. It is well known that self-adjoint extensions of $A$ are parametrized by Lagrangian planes (also known as self-adjoint linear relations) in $\bndra$, cf., e.g., \cite{Behrndt_2020, GG,  Schm}, by means of the identity $[\Gamma_0, \Gamma_1]^{\top}(\dom(\cA))=\cG$, where $A\subset \cA=\cA^*\subset A^*$ and $\cG\subset \bndra$ is a Lagrangian plane. Firstly, we show that for a $C^2$ path of Lagrangain planes $t\mapsto \cG_t$ and a family of self-adjoint extensions of $A$ with $[\Gamma_0, \Gamma_1]^{\top}(\dom(\cA_t))=\cG_t$,  the path $t\mapsto (\cA_t -\bfi)^{-1}$  is also $C^2$-regular in uniform operator topology. Our argument is based on a Krein-type formula from \cite{LSHadamard}, cf. \eqref{kreinformula}, expressing resolvent difference in terms of the difference of orthogonal projections onto the respective planes.  Secondly, in our main  Theorems \ref{prop1.8new}, \ref{theorem2.2}, \ref{EVExp}, we derive explicit expressions for the coefficients of the second order Taylor expansions for the resolvents of $t-$dependent operators $\cA_t$, the operators per se (more precisely, their conjugated restrictions onto spectral subspaces), and their eigenvalue curves.  Next, in Theorem \ref{prop2.9} we specialized to the case when the Lagrangian planes $\cG_t$ are given as kernels of the $1\times 2$  block operator matrices $Z_t=[X_t, Y_t]$  in $\bndra$, and derive explicit formulas for the first and second derivatives of the eigenvalues of $\cA_t$ in terms of $Z_t$ and its derivatives. In particular,  Corollary \ref{cor211} gives an expansion of the eigenvalue curves for the abstract Robin-type self-adjoint extensions of $A$ up to the second order and demonstrates that the Hessian in this case features a remarkable similarity with that derived by G. Berkolaiko and P. Kuchement in \cite[Eq. 40]{BCE} for additive perturbations of a fixed self-adjoint operator with a simple eigenvalue.   

\section{Resolvent expansions}
Throughout, $A$ denotes a densely defined, symmetric operator acting in a complex Hilbert space $\cH$ and having equal deficiency indices, i.e.,  $\dim\ker(A^*-\bfi)=\dim\ker(A^*+\bfi)\le\infty$.  We equip the vector space $\cH_+=\dom(A^*)$ with the graph scalar product 
\begin{equation}
\langle u, v\rangle_{\cH_+}:=\langle u, v\rangle_{\cH}+\langle A^*u, A^*u\rangle_{\cH}, \ u,v\in\dom(A^*).
\end{equation}
Let $\cH_-:=(\cH_+)^*$ be the dual to $\cH_+$ space consisting of linear bounded functionals. We notice a chain of natural embeddings, 
%\begin{equation}\lb{aub100}
$\cH_+\hookrightarrow\cH\hookrightarrow\cH_-$,
%\end{equation}
where the first embedding is given by $\cH_+\ni u\mapsto u\in\cH$, and the second embedding is given by $\cH\ni v\mapsto \langle\cdot, v\rangle_{\cH}$. Let $\Phi^{-1}:\cH_+\to\cH_-$ be the conjugate linear Riesz isomorphism such that \[{}_{\cH_+}\langle u,\Phi^{-1}w\rangle_{\cH_-}=\langle u,w\rangle_{\cH_+}=
\langle u,w\rangle_{\cH}+\langle A^*u,A^*w\rangle_{\cH}, u,w\in\cH_+.\]

Next, we fix the standard setting of boundary triplets, see \cite{Behrndt_2020, Schm} for more details, the history of the subject and a vast bibliography. Below, we write $[a,b]^{\top}$ for a $2\times 1$ column vector with entries $a,b$.
\begin{hypothesis}\lb{hyp3.6}  Assume that $(\Gamma_0, \Gamma_1, \mathfrak{H})$ is a boundary triplet. That is, we assume that we are given an auxiliary (boundary) Hilbert space $\mathfrak{H}$ and a trace operator
\[\tr:=[\Gamma_0, \Gamma_1]^{\top}\in\cB(\cH_{+}, \bndra),\] which is bounded, surjective and satisfies the following abstract Green identity,
	\begin{equation}\lb{3.61}
	\langle A^*u,v\rangle_{\cH}-\langle u,A^*v\rangle_{\cH}=\langle\Gamma_1u,\Gamma_0v\rangle_{\mathfrak{H}}-\langle\Gamma_0u,  \Gamma_1v\rangle_{\mathfrak{H}} \text{ for all $u,v\in\cH_{+}$}.
	\end{equation}
\end{hypothesis}
Let us introduced a bounded symplectic form $\omega$ defined by the formulas
\begin{align}
\begin{split}\lb{5.3}
&\omega\big((f_1,f_2)^{\top}, (g_1,g_2)^{\top}\big):=\langle f_2,g_1\rangle_{\mathfrak{H}}-\langle f_1, g_2\rangle_{\mathfrak{H}}\\
&\quad=\left\langle
J (f_1, f_2)^{\top},(g_1,g_2)^{\top}
\right\rangle_{\mathfrak{H}\times\mathfrak{H}},\ 
J:=\begin{bmatrix}
0 & I_{\mathfrak{H}} \\
-I_{\mathfrak{H}}& 0
\end{bmatrix},f_k,g_k\in\mathfrak H, k=1,2,
\end{split}
\end{align}
so that the right-hand side of \eqref{3.61} becomes $\omega(\tr u, \tr v)$.
We denote the annihilator of a subspace $\cG\subset\mathfrak{H}\times\mathfrak{H}$ by
\begin{equation}
\cG^{\circ}:=\{ (f_1,f_2)^{\top}\in \mathfrak{H}\times\mathfrak{H}: \omega\big((f_1,f_2)^{\top}, (g_1,g_2)^{\top}\big)=0 \text{\ for all}\ (g_1,g_2)^{\top}\in \cG\},
\end{equation}
and recall that the subspace $\cG$ is called {\it Lagrangian} if $\cG=\cG^{\circ}$. Further, let $\Lambda(\mathfrak{H}\times\mathfrak{H})$ denote the metric space of Lagrangian subspaces of $\mathfrak{H}\times\mathfrak{H}$  equipped with the metric
\begin{equation}\lb{dmet}
d(\cG_1, \cG_2):=\|Q_1-Q_2\|_{\cB(\mathfrak{H}\times\mathfrak{H})},\ \cG_1, \cG_2\in \Lambda(\mathfrak{H}\times\mathfrak{H}), 
\end{equation}
where  $Q_j$ is the orthogonal projection onto $\cG_j$ acting in $\mathfrak{H}\times\mathfrak{H}$, $j=1,2$. 
Let us also recall the classical, and central for this paper, fact that the Lagrangian planes in $\Lambda(\mathfrak{H}\times\mathfrak{H})$ are in one-to-one correspondence with self-adjoint extensions of $A$, see \cite{Schm}. That is, for every self-adjoint extension $\cA$ of $A$ the subspace $\tr(\dom(\cA))\subset \bndra$ is Lagrangian with respect to $\omega$, and, conversely, every Lagrangian plane $\cG\in \Lambda(\bndra)$ gives rise to a self-adjoint extension of $A$ with domain $\tr^{-1}(\cG)\subset \dom(A^*)$, see \cite{LSHadamard} and the references therein for more details. 

In this paper we are interested in explicit second order Taylor expansions for resolvents and eigenvalues of a family of self-adjoint extensions $\cA_t$ of $A$ defined by a given family of Lagrangian planes $t\mapsto \cG_t$ for $t$ near some $t_0$. We note that  the regularity of a path of Lagrangian planes  $\cG_t$ is determined by the regularity of the path $t\mapsto Q_t$ of orthogonal projections $Q_t$ acting in $\bndra$ with $\ran(Q_t)=\cG_t$, in other words, the topology on $\Lambda(\bndra)$ is determined by the metric in \eqref{dmet}.  Throughout, we use ``dot'' to denote $t$-derivative so that, in particular, $\dot{Q}_{t_0}=\frac{\rm d}{{\rm d}t}Q_t\big|_{t=t_0}$. Let us now record our assumptions.

\begin{hypothesis}\label{hyp1.3bis}
Let  $Q: [0,1]\rightarrow \cB(\mathfrak{H}\times\mathfrak{H}), t\mapsto Q_t$ be a one-parameter family of orthogonal projections and assume that  $\ran  (Q_t) \in\Lambda(\mathfrak{H}\times\mathfrak{H})$ is a Lagrangian plane. Fix $t_0\in[0,1]$ and assume that 
\begin{align}
\begin{split}\lb{QVexpansions}
&Q_t\underset{t\rightarrow t_0}{=}Q_{t_0}+\dot Q_{t_0}(t-t_0)+\frac12\ddot Q_{t_0}(t-t_0)^2+o(t-t_0)^2\text{\ in\ }\cB(\bndra).
\end{split}
\end{align}
Let $\cA_t$ be a family of self-adjoint extensions of $A$ satisfying 
	\begin{align}
	{\tr\big( \dom(\cA_{t})\big)}=\ran (Q_t).\lb{3272new-bis}
	\end{align}
Finally, denote $R_t(\zeta):=(\cA_t-\zeta)^{-1}\in\cB(\cH)$ for $\zeta\not\in \Sp(\cA_t)$.
\end{hypothesis}

We recall from \cite[Theorem 3.11]{LSHadamard} the following Krein-type resolvent formula.
\begin{theorem} Assume Hypothesis \ref{hyp1.3bis} and let 
	$t, t_0 \in [0,1],$ $\zeta\not\in\Sp(\cA_{t})\cup \Sp(\cA_{t_0})$. Then	one has
	\begin{align}\lb{kreinformula}
	R_{t}(\zeta)- R_{t_0}(\zeta)=(\tr R_t(\zeta))^*(Q_t-Q_{t_0})JQ_{t_0}\tr R_{t_0}(\zeta). 
	\end{align}
\end{theorem}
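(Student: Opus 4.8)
The plan is to insert the two resolvents into the abstract Green identity \eqref{3.61} and to exploit that the boundary traces of elements of $\dom(\cA_t)$ and $\dom(\cA_{t_0})$ lie in the Lagrangian planes $\ran(Q_t)$ and $\ran(Q_{t_0})$. Fix $\zeta\notin\Sp(\cA_t)\cup\Sp(\cA_{t_0})$; since $\cA_t,\cA_{t_0}$ are self-adjoint their spectra are real, so $\overline\zeta$ also lies in the resolvent set of both. For arbitrary $f,g\in\cH$ put $u:=R_{t_0}(\zeta)f$ and $v:=R_t(\overline\zeta)g$. Because $\cA_t,\cA_{t_0}\subset A^*$ these vectors lie in $\cH_+=\dom(A^*)$, with $A^*u=\zeta u+f$, $A^*v=\overline\zeta v+g$, and $u\in\dom(\cA_{t_0})$, $v\in\dom(\cA_t)$; hence by \eqref{3272new-bis} one has $\tr u=Q_{t_0}\tr u$ and $\tr v=Q_t\tr v$. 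I would also record at the outset that, for $\zeta$ in the resolvent set, $R_t(\zeta)$ is bounded as a map $\cH\to\cH_+$ in the graph norm since $\|A^*R_t(\zeta)h\|_{\cH}=\|\zeta R_t(\zeta)h+h\|_{\cH}$; thus $\tr R_t(\zeta)\in\cB(\cH,\bndra)$ has a well-defined bounded adjoint, and similarly with $t$ replaced by $t_0$ and $\zeta$ by $\overline\zeta$.

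Now substitute into \eqref{3.61}, written through the symplectic form \eqref{5.3} as $\langle A^*v,u\rangle_{\cH}-\langle v,A^*u\rangle_{\cH}=\omega(\tr v,\tr u)$. On the left, the terms proportional to $\langle v,u\rangle_{\cH}$ cancel exactly because the spectral parameters of $u$ and $v$ are complex-conjugate, leaving $\langle g,u\rangle_{\cH}-\langle v,f\rangle_{\cH}$; using $R_{t_0}(\zeta)^*=R_{t_0}(\overline\zeta)$ this is a matrix element of the resolvent difference. On the right, insert $\tr v=Q_t\tr v$, $\tr u=Q_{t_0}\tr u$, write $\omega(a,b)=\langle Ja,b\rangle_{\bndra}$, and move the projections across using $Q_t^*=Q_t$, $Q_{t_0}^*=Q_{t_0}$, $J^*=-J$, to obtain $\omega(\tr v,\tr u)=-\langle\tr v,\,Q_tJQ_{t_0}\tr u\rangle_{\bndra}$. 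Here the decisive point is that $\ran(Q_{t_0})$ is Lagrangian, equivalently $Q_{t_0}JQ_{t_0}=0$, so that $Q_tJQ_{t_0}=(Q_t-Q_{t_0})JQ_{t_0}$ — precisely the factor occurring in \eqref{kreinformula}. (That $\ran(Q_t)$ is Lagrangian, i.e. $Q_tJQ_t=0$, enters elsewhere, namely in guaranteeing that $\cA_t$ is self-adjoint in the first place.) Equating both sides and removing the arbitrary $f,g$ by passing to the adjoints of the bounded operators $\tr R_{t_0}(\zeta)$ and $\tr R_t(\overline\zeta)$, and invoking $R_t(\overline\zeta)^*=R_t(\zeta)$ a final time, delivers \eqref{kreinformula}.

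The part I expect to require the most care is the bookkeeping: tracking which of $\zeta,\overline\zeta$ appears in every trace-of-resolvent and every adjoint so that the identity lands in the exact form \eqref{kreinformula}, and — relatedly — confirming that the central factor comes out as $(Q_t-Q_{t_0})JQ_{t_0}$ and not, say, $Q_{t_0}J(Q_t-Q_{t_0})$; this is pinned down by the Lagrangian identity $Q_{t_0}JQ_{t_0}=0$ together with self-adjointness of the projections and $J^*=-J$. The remaining ingredients — graph-norm boundedness of $R_t(\zeta):\cH\to\cH_+$ and the relations $R_\bullet(\overline\zeta)^*=R_\bullet(\zeta)$ coming from $\cA_\bullet=\cA_\bullet^*$ — are routine.
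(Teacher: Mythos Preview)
Your argument is essentially the paper's own proof (given in detail as Theorem~\ref{thm1.7}): insert $u=R_{t_0}(\zeta)f$ and $v=R_t(\overline\zeta)g$ into the Green identity, observe that the $\langle u,v\rangle$ terms cancel, rewrite the boundary side via $Q_t\tr v=\tr v$, $Q_{t_0}\tr u=\tr u$, and then use $Q_{t_0}JQ_{t_0}=0$ to replace $Q_tJQ_{t_0}$ by $(Q_t-Q_{t_0})JQ_{t_0}$. This is correct and matches the paper step for step.

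One point to flag: what your computation actually produces is
\[
R_t(\zeta)-R_{t_0}(\zeta)=\big(\tr R_t(\overline{\zeta})\big)^*(Q_t-Q_{t_0})JQ_{t_0}\tr R_{t_0}(\zeta),
\]
with $\overline{\zeta}$ inside the adjoint, exactly as in the paper's own proof \eqref{5.14a}. Your closing remark ``invoking $R_t(\overline\zeta)^*=R_t(\zeta)$ a final time'' does not convert $\big(\tr R_t(\overline\zeta)\big)^*$ into $\big(\tr R_t(\zeta)\big)^*$; the identity $R_t(\overline\zeta)^*=R_t(\zeta)$ holds for the resolvent as a map $\cH\to\cH$, but $(\tr R_t(\overline\zeta))^*$ is the adjoint of the composite map $\cH\to\bndra$, and this is genuinely different from $(\tr R_t(\zeta))^*$ when $\zeta\notin\bbR$. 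The formula as printed in \eqref{kreinformula} is a misprint for the $\overline\zeta$ version (compare \eqref{5.14a}, \eqref{new5.14a}); your derivation is correct, so just stop at the $\overline\zeta$ formula rather than trying to force the last conversion.
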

\begin{remark}\lb{Remark2.4}
1) Due to Lagrangian properties of $Q_t$ we have $Q_{t_0}JQ_{t_0}=0$; in addition, since $\tr( \ran(R_{t_0}(\zeta)))=\tr (\dom(\cA_{t_0}))=\ran(Q_{t_0})$, we have $Q_{t_0}\tr R_{t_0}(\zeta)=\tr R_{t_0}(\zeta)$ and formula \eqref{kreinformula} can be written as follows,
\begin{align}\lb{krein1.0}
R_{t}(\zeta)- R_{t_0}(\zeta)=(\tr R_t(\zeta))^*Q_tJ\tr R_{t_0}(\zeta). 
\end{align}

2) The adjoint operator $(\tr R_t(\zeta))^*$ in the right-hand side of \eqref{kreinformula} is a rather subtle object. It should be viewed as an operator in $\cB(\bndra, \cH)$. Indeed, one has $\tr\in\cB(\cH_+, \bndra)$ and  $\ran(R_t(\zeta))\subset\cH_+$, so that the product $\tr (R_t(\zeta))$ is well defined as a mapping from $\cH$ to $\bndra$. Moreover, we claim that $R_t(\zeta)\in\cB(\cH, \cH_+)$, hence, in particular $\tr (R_t(\zeta))\in\cB(\cH, \bndra)$ so its adjoint is indeed in $\cB(\bndra, \cH)$ as claimed above. To check the inclusion $R_t(\zeta)\in\cB(\cH, \cH_+)$ we notice that  for some $c>0$ and every $u\in\cH$ one has
\begin{align}
\|R_{t_0}(\zeta)u\|^2_{\cH_+}&=\|R_{t_0}(\zeta)u\|^2_{\cH}+\|A^*R_{t_0}(\zeta)u\|^2_{\cH}\\
&\leq \|R_{t_0}(\zeta)u\|^2_{\cH}+2\|(\cA_{t_0}-\zeta)R_{t_0}(\zeta)u\|^2_{\cH}+2\|\zeta R_{t_0}(\zeta)u\|^2_{\cH}\\
&\leq \|R_{t_0}(\zeta)u\|^2_{\cH}+2\|u\|^2_{\cH}+2\|\zeta R_{t_0}(\zeta)u\|^2_{\cH}\leq c\|u\|^2_{\cH}. 
\end{align}

3) It is possible to represent $(\tr R_t(\zeta))^*$ as the product of two adjoints, $(R_t(\zeta))^*$ and $\tr^*$. In fact, the adjoint of the resolvent $(R_t({\zeta}))^*\in\cB(\cH_-,\cH)$ is a natural extension of $R_t( \overline{\zeta})$, originally defied as a mapping from $\cH$ to itself, to a mapping from $\cH_-$ to $\cH$, see \cite[Proposition 2.4]{LSHadamard}. In this work, however, we chose not to unfold the product $(\tr R_t(\zeta))^*$ and shall view $R_t(\zeta)$ as an element of $\cB(\cH, \cH_+)$ whenever it precedes $\tr$. 

4) From the outset we only have $\ran((\tr R_t(\zeta))^*)\subset \cH$. However, \eqref{krein1.0} suggests that $\ran((\tr R_t(\zeta))^*)\subset \cH_+$  as both resolvents in the left-hand side of \eqref{krein1.0} have ranges in $\cH_+$. It turns out, cf. Theorem \ref{prop1.8new} (3), that Krein's-type formula \eqref{krein1.0} produces a similar effect for the derivative  $\dot R_{t_0}(\zeta)$ that satisfies $\ran(\dot R_{t_0}(\zeta))\subset \cH_+$  although {\it a priori} one only has $\ran(\dot R_{t_0}(\zeta))\subset \cH$.
\end{remark}

Our first principal result gives Taylor expansions for the resolvent of $R_t(\zeta)$
\begin{theorem}\lb{prop1.8new} Assume Hypothesis \ref{hyp1.3bis}.
Suppose that $\zeta_0\not \in \spec(\cA_{t_0})$ for some $t_0\in[0,1]$ and $\zeta_0\in\bbC$, and define 
 	\begin{align}
 &\cU_\varepsilon=\{(t,\zeta)\in[0,1]\times\bbC: |t-t_0|\le\epsilon, |\zeta-\zeta_0|\le\epsilon\} \text{ for $\epsilon>0$}.
 \end{align}  
	\begin{enumerate}
		\item\lb{18i1}There exists an $\varepsilon>0$ such that if $(t,\zeta)\in\cU_\varepsilon$ then $\zeta\not\in\Sp(\cA_t)$.
		\item For $\varepsilon>0$, $(t, \zeta)\in\cU_{\varepsilon}$ as in part \eqref{18i1} above one has
		\begin{equation}\lb{bigop}
		(\tr R_t(\zeta))^*(Q_t-Q_{t_0})JQ_{t_0}\tr R_{t_0}(\zeta)\in\cB(\cH, \cH_+).
		\end{equation}
		\item For $\varepsilon>0$, $\zeta\in\bbC$ as in  part \eqref{18i1} above  we define
		\begin{align}
		&\dot R_{t_0}(\zeta):=  (\tr R_{t_0}(\zeta))^*\dot Q_{t_0}J\tr R_{t_0}(\zeta).\lb{derR} 
		\end{align}
		Then one has 
		\begin{align}
		&\dot R_{t_0}(\zeta)\in \cB(\cH, \cH_+).\lb{dbound}
		\end{align}
		Moreover, the following asymptotic expansion holds uniformly for $|\zeta-\zeta_0|<\varepsilon$,
		\begin{align}\lb{vtor}
		&R_t(\zeta)=R_{t_0}(\zeta)+\dot R_{t_0}(\zeta)(t-t_0)+o(t-t_0) \text{\ in\ } \cB(\cH, \cH_+).
		\end{align}
		\item\lb{18i3}
		For $\varepsilon>0$ as in part \eqref{18i1}  the following asymptotic expansion holds uniformly for $|\zeta-\zeta_0|<\varepsilon$,
		\begin{align}
		&R_t(\zeta)\underset{t\rightarrow t_0}{=}R_{t_0}(\zeta)+\dot R_{t_0}(\zeta)(t-t_0)+\frac{1}{2} \ddot R_{t_0}(\zeta)(t-t_0)^2+o(t-t_0)^2\ \text{\ in\ }\cB(\cH),\lb{new1.43nn}
		\end{align}
		where $\dot R_{t_0}(\zeta)$ is given by \eqref{derR} and
		\begin{align}
		\ddot R_{t_0}(\zeta):=& (\tr R_{t_0})^*\ddot Q_{t_0}JTR_{t_0}+2(\tr\dot R_{t_0}(\zeta))^*\dot Q_{t_0}J\tr R_{t_0}(\zeta).\lb{dderR}
		\end{align}
	\end{enumerate}
\end{theorem}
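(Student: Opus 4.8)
The plan is to bootstrap everything from the Krein formula \eqref{krein1.0} together with the already-established first-order analysis, treating the second-order expansion as a perturbation of the first-order one. First I would establish part \eqref{18i1}: since $\zeta_0 \notin \Sp(\cA_{t_0})$ means $R_{t_0}(\zeta_0)$ is bounded, and since \eqref{kreinformula} shows $R_t(\zeta) - R_{t_0}(\zeta)$ is controlled by $\|Q_t - Q_{t_0}\|$ times bounded resolvent factors, a standard Neumann-series argument gives invertibility of $\cA_t - \zeta$ for $(t,\zeta)$ in a small enough box $\cU_\varepsilon$; one must also use continuity of $\zeta \mapsto R_{t_0}(\zeta)$ near $\zeta_0$ and the expansion \eqref{QVexpansions} to get a uniform bound on $\|R_t(\zeta)\|$ over $\cU_\varepsilon$. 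This uniform bound is what makes the subsequent $o(\cdot)$ estimates legitimate.

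Next, for parts (2) and (3), I would rewrite \eqref{krein1.0} and isolate the mapping properties. The key observation from Remark \ref{Remark2.4}(2) is that $R_t(\zeta) \in \cB(\cH,\cH_+)$ uniformly on $\cU_\varepsilon$, hence $\tr R_t(\zeta) \in \cB(\cH,\bndra)$ and $(\tr R_t(\zeta))^* \in \cB(\bndra,\cH)$. For \eqref{bigop} I need the stronger statement that this adjoint actually maps into $\cH_+$ when composed with the projection data; this follows because the left-hand side equals $R_t(\zeta)-R_{t_0}(\zeta)$ by \eqref{kreinformula}, and both resolvents land in $\cH_+$. Then \eqref{derR} defines $\dot R_{t_0}(\zeta)$, and \eqref{dbound} follows by the same reasoning applied with $\dot Q_{t_0}$ in place of $Q_t - Q_{t_0}$: one writes $\dot R_{t_0}(\zeta) = \lim_{t\to t_0}(t-t_0)^{-1}(R_t(\zeta)-R_{t_0}(\zeta))$ in $\cB(\cH)$ using \eqref{QVexpansions}, and separately shows the right-hand side of \eqref{derR} is bounded $\cH \to \cH_+$ by factoring $(\tr R_{t_0}(\zeta))^* = (\tr R_{t_0}(\zeta))^* Q_{t_0}$ ... more carefully, by using that $(\tr R_{t_0}(\zeta))^*$ has range in $\cH_+$ precisely on the subspace $\ran(Q_{t_0})$-related directions; the cleanest route is to verify directly that $(\tr R_{t_0}(\overline\zeta))$ extends to a bounded map and dualize via $\Phi^{-1}$, as in the cited \cite[Proposition 2.4]{LSHadamard}. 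The expansion \eqref{vtor} in $\cB(\cH,\cH_+)$ then comes from substituting $Q_t - Q_{t_0} = \dot Q_{t_0}(t-t_0) + O((t-t_0)^2)$ into \eqref{krein1.0}, replacing $\tr R_t(\zeta)$ by $\tr R_{t_0}(\zeta) + o(1)$ (using part (3)'s $\cB(\cH,\cH_+)$-continuity of $t \mapsto R_t(\zeta)$, which itself needs \eqref{vtor} at the $o(t-t_0)$ level — so this should be organized as: first prove $\cB(\cH)$-continuity, then the first-order expansion, then upgrade the topology), and collecting terms.

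For part \eqref{18i3}, the strategy is to iterate the Krein formula once. Starting from \eqref{krein1.0}, $R_t(\zeta) - R_{t_0}(\zeta) = (\tr R_t(\zeta))^* Q_t J \tr R_{t_0}(\zeta)$, I substitute the expansion \eqref{QVexpansions} for $Q_t$ and the first-order expansion \eqref{vtor} for $R_t(\zeta)$ inside the factor $(\tr R_t(\zeta))^*$. Writing $R_t = R_{t_0} + \dot R_{t_0}(t-t_0) + o(t-t_0)$ and $Q_t = Q_{t_0} + \dot Q_{t_0}(t-t_0) + \tfrac12 \ddot Q_{t_0}(t-t_0)^2 + o((t-t_0)^2)$, and using $Q_{t_0}J\tr R_{t_0}(\zeta) = 0$ (the Lagrangian identity from Remark \ref{Remark2.4}(1), since $Q_{t_0}JQ_{t_0} = 0$ and $\tr R_{t_0}(\zeta)$ has range in $\ran Q_{t_0}$), the zeroth-order term in $Q_t$ drops out. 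Collecting the coefficient of $(t-t_0)^1$ recovers \eqref{derR}, and the coefficient of $(t-t_0)^2$ is exactly $\tfrac12$ times $(\tr R_{t_0})^*\ddot Q_{t_0}J\tr R_{t_0} + 2(\tr\dot R_{t_0}(\zeta))^*\dot Q_{t_0}J\tr R_{t_0}(\zeta)$, which is \eqref{dderR}. The remainder is $o((t-t_0)^2)$ in $\cB(\cH)$ because each dropped term carries either an extra factor of $(t-t_0)$ beyond second order or an $o((t-t_0)^2)$ from the expansions, and all the operator factors are uniformly bounded on $\cU_\varepsilon$ by part (1); uniformity in $\zeta$ for $|\zeta - \zeta_0| < \varepsilon$ follows from the uniformity of all the constituent bounds.

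The main obstacle I anticipate is the careful bookkeeping of which topology ($\cB(\cH)$ versus $\cB(\cH,\cH_+)$) each estimate lives in, and in particular justifying that $(\tr \dot R_{t_0}(\zeta))^*$ makes sense — this requires knowing $\dot R_{t_0}(\zeta) \in \cB(\cH,\cH_+)$ (part (3)) before one can even write down $\ddot R_{t_0}(\zeta)$, so the logical order must be: (1) $\Rightarrow$ $\cB(\cH)$-continuity of $t \mapsto R_t(\zeta)$ $\Rightarrow$ $\cB(\cH)$-differentiability with derivative \eqref{derR} $\Rightarrow$ the range-in-$\cH_+$ improvement \eqref{dbound} $\Rightarrow$ the upgraded expansion \eqref{vtor} $\Rightarrow$ $\cB(\cH,\cH_+)$-continuity of $t\mapsto R_t(\zeta)$ $\Rightarrow$ finally the second-order expansion \eqref{new1.43nn}. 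A secondary subtlety is that the factor $(\tr R_t(\zeta))^*$ depends on both $t$ \emph{and} $\zeta$, so the substitution step mixes the $t$-expansion with $\zeta$-holomorphy; keeping $\zeta$ merely as a parameter ranging over the compact disk $|\zeta - \zeta_0| \le \varepsilon$, over which all norms are uniformly bounded, sidesteps this cleanly.
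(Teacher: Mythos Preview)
Your overall strategy and logical ordering are correct and match the paper: Krein's formula drives part \eqref{18i1}, part (2) follows because the expression in \eqref{bigop} literally equals $R_t(\zeta)-R_{t_0}(\zeta)$, and part \eqref{18i3} is obtained by plugging the first-order expansion of $R_t$ and the second-order expansion of $Q_t$ back into \eqref{krein1.0}. The identification of the dependency chain in your final paragraph is exactly right.

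The gap is in part (3), precisely at the step you flag as ``the range-in-$\cH_+$ improvement \eqref{dbound}''. None of the three routes you sketch actually yields $\dot R_{t_0}(\zeta)\in\cB(\cH,\cH_+)$: the factoring $(\tr R_{t_0}(\zeta))^*=(\tr R_{t_0}(\zeta))^*Q_{t_0}$ is vacuous (it is automatic since $\ran(\tr R_{t_0})\subset\ran Q_{t_0}$) and does not improve the range; the phrase ``has range in $\cH_+$ precisely on the subspace $\ran(Q_{t_0})$-related directions'' has no content as stated; and unfolding $(\tr R_{t_0}(\zeta))^*=R_{t_0}(\bar\zeta)\tr^*$ via \cite{LSHadamard} produces an operator $\cH_-\to\cH$, not $\cH_-\to\cH_+$. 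The missing ingredient is the elementary but decisive identity
\[
A^*\big(R_{t}(\zeta)-R_{s}(\zeta)\big)=\zeta\big(R_{t}(\zeta)-R_{s}(\zeta)\big),
\]
obtained from $A^*R_t(\zeta)u=\cA_tR_t(\zeta)u=u+\zeta R_t(\zeta)u$ and the cancellation of the $u$ terms. This gives $\|(R_t-R_s)u\|_{\cH_+}^2=(1+|\zeta|^2)\|(R_t-R_s)u\|_\cH^2$, so any Cauchy/convergence statement for difference quotients in $\cB(\cH)$ transfers \emph{for free} to $\cB(\cH,\cH_+)$. The paper runs exactly this argument on the difference quotients $\cD_n=(R_{t_n}-R_{t_0})/(t_n-t_0)$: they are Cauchy in $\cB(\cH)$ by the $\cB(\cH)$-expansion you already have, hence Cauchy in $\cB(\cH,\cH_+)$, hence convergent there, and the limit must agree with $\dot R_{t_0}(\zeta)$. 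This simultaneously gives \eqref{dbound} and upgrades \eqref{vtor} to $\cB(\cH,\cH_+)$. Once you have this, your treatment of part \eqref{18i3} goes through verbatim, since $\tr\dot R_{t_0}(\zeta)$ is now a legitimate bounded operator.
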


\begin{proof}% Our strategy is as follows.
%In order to obtain the second order Taylor expansion for   $t\mapsto R_t(\zeta)\in\cB(\cH)$ we derive below an expansions for $t\mapsto R_t(\zeta)(V_{t_0}-V_t)\in\cB(\cH)$, $t\mapsto R_t(\zeta)(K_t-K_{t_0})\in\cB(\cH)$ and substitute them into Krein's-type formula \eqref{kreinformula}. Since $K_t-K_{t_0}$ is of order $t-t_0$, it is enough to expand $t\mapsto R_t(\zeta)$ in $t\mapsto R_t(\zeta)(K_t-K_{t_0})$ up to the first order. The expansion for $t\mapsto R_t(\zeta)$, however, must be carried out in $\cB(\cH_-,\cH)$ or, equivalently, in the dual space $\cB(\cH, \cH_+)$,  since $(K_t-K_{t_0})\in\cB(\cH_+,\cH_-)$. This subtlety is addressed by bootstrapping resolvent norm estimates initially obtained in $\cB(\cH)$ to those in $\cB(\cH, \cH_+)$. 

{\it Proof of} \eqref{18i1}. First, we show that for $\zeta=\bfi$ we have
\begin{equation}\lb{bddRes}
\|R_t(\bfi)\|_{\cB(\cH,\cH_+)}\underset{t\rightarrow t_0}{=}\cO(1).
\end{equation}
Of course, such asymptotic formula holds in $\cB(\cH)$ by self-adjointness of $\cA_t$. A stronger version which we are after holds due to the fact that $R_t(\bfi)$ is the resolvent of a self-adjoint restriction of $A^*$; this observation is crucial throughout the proof. 

Since  $\|R_t(\bfi)\|_{\cB(\cH)}\leq 1$ there exists $c>0$ such that for all $u\in\cH_+$ and $t$ near $t_0$ one has
\begin{align}
\begin{split}\lb{arg1}
\|R_t(\bfi)u\|_{\cH_+}^2&= \|R_t(\bfi)u\|_{\cH}^2+\|A^*R_t(\bfi)u\|_{\cH}^2\\
&= \|R_t(\bfi)u\|_{\cH}^2+2\|(A_t-\bfi) R_t(\bfi)u\|_{\cH}^2+2\|\bfi R_t(\bfi)u\|_{\cH}^2 \leq c\|u\|_{\cH}^2
\end{split}
\end{align}
which confirms \eqref{bddRes} (we used $(\cA_t-\bfi) R_t(\bfi)=I$). Next, by Krein's formula \eqref{krein1.0} with $\zeta=\bfi$, we have
\[
R_t(\bfi)=R_{t_0}(\bfi)+ (\tr R_t(\bfi))^*(Q_t-Q_{t_0})J\tr R_{t_0}(\bfi).
\]
Combining this with $ \|\tr R_t(\bfi)\|_{\cB(\cH,\bndra)}\underset{t\rightarrow t_0}{=}\cO(1)$ we get 
\begin{equation}\lb{resnepr}
\|R_t(\bfi)-R_{t_0}(\bfi)\|_{\cB(\cH)}\underset{t\rightarrow t_0}{=}\cO(t-t_0),
\end{equation}
so that $\cA_t\rightarrow \cA_{t_0}$, $t\rightarrow t_0$ in the norm resolvent sense.  Then \cite[Theorem VIII.23(a)]{RS1} yields the assertion.

{\it Proof of (2).} By Krein's formula the operator in \eqref{bigop} is equal to the difference of the resolvents of two self-adjoint restrictions of $A^*$, each of which is an element of $\cB(\cH, \cH_+)$ as discussed in Remark \ref{Remark2.4} part 2).

{\it Proof of (3).} We begin by obtaining an improved version of  \eqref{resnepr} given by
\begin{equation}\lb{1resnepr}
\|R_t(\bfi)-R_{t_0}(\bfi)\|_{\cB(\cH, \cH_+)}\underset{t\rightarrow t_0}{=}\cO(t-t_0).
\end{equation}
To that end we write
\begin{align}\lb{3291ineq}
&\|R_t(\bfi)u-R_{t_0}(\bfi)u\|_{\cH_+}^2 = \|R_t(\bfi)u-R_{t_0}(\bfi)u\|_{\cH}^2+\|A^*R_t(\bfi)u-A^*R_{t_0}(\bfi)u\|^2_{\cH}\\
&\quad\leq  \|R_t(\bfi)u-R_{t_0}(\bfi)u\|_{\cH}^2+2\|(\cA_t-\bfi )R_t(\bfi)u-(\cA_{t_0}-\bfi )R_{t_0}(\bfi)u\|^2_{\cH}\\
&\hspace{5.1cm}+2\|-\bfi R_t(\bfi)u+\bfi R_{t_0}(\bfi)u\|^2_{\cH}\\
&\quad\leq  3\|R_t(\bfi)u-R_{t_0}(\bfi)u\|_{\cH}^2\underset{t\rightarrow t_0}{=}\cO(t-t_0), \text{\ uniformly for\ } \|u\|_{\cH}\leq 1.
\end{align}
We will need a slightly more general version of \eqref{1resnepr} in which $\bfi$ is replaced by $\zeta\in\bbC$ near $\zeta_0$.  Let us fix $\varepsilon_0>0$ such that $\mathbb{B}_{\varepsilon_0}(\zeta_0)\subset \C\setminus \Sp(\cA_{t_0})$ for the $\varepsilon_0$-disc centered at $\zeta_0$. Then by \eqref{resnepr} and \cite[Theorem VIII.23]{RS1} we have
$\mathbb{B}_{\varepsilon_0}(\zeta_0)\cap\Sp(\cA_{t})=\emptyset$
for $t$ sufficiently close to $t_0$. Hence, 
\begin{equation}\lb{new1.17}
\sup\{\| R_t(\zeta)\|_{\cB(\cH)}: (t,\zeta)\in \cU_{\varepsilon}\}<\infty
\end{equation}
for a sufficiently small $\varepsilon\in(0,\varepsilon_0)$. Then combining the resolvent identity
\begin{equation}
R_t(\zeta)=	  R_t(\bfi)-(\bfi-\zeta)  R_t(\bfi)R_t(\zeta),
\end{equation}
and \eqref{new1.17} we infer that the  asymptotic formula
\begin{align}
&\|R_t(\zeta)-R_{t_0}(\zeta)\|_{\cB(\cH, \cH_+)}\underset{t\rightarrow t_0}{=}\cO(t-t_0),
\end{align}
holds uniformly for $|\zeta-\zeta_0|<\varepsilon$.

We are now ready to turn to the proof of \eqref{dbound}, \eqref{vtor}. To that end we will first establish the same expansion but in the space $\cB(\cH)$ possessing weaker topology. Employing Krein's formula \eqref{Remark2.4} ones again we obtain
\begin{align}
\begin{split}\lb{frstord}
&R_{t}(\zeta)\underset{t\rightarrow t_0}{=}
R_{t_0}(\zeta)+\big(TR_{t_0}(\zeta)
+\cO_{\cB(\cH, \bndra)}(t-t_0)\big)^*\big(\dot Q_{t_0}(t-t_0)+o(t-t_0)\big)J\tr R_{t_0}(\zeta)\\
& =R_{t_0}(\zeta)+\dot R_{t_0}(\zeta)(t-t_0)+o(t-t_0) \text{\ in\ } \cB(\cH).
\end{split}
\end{align}
Our next objective is to derive this asymptotic expansion in $\cB(\cH, \cH_+)$ and to prove \eqref{dbound}. To that end we pick any sequence $t_n\rightarrow t_0$, $n\rightarrow\infty$ and introduce a sequence of operators
\begin{align}
\cD_{n}(\zeta):=\frac{R_{t_n}(\zeta)-R_{t_0}(\zeta)}{{t_n}-t_0}, n\in\bbN.
\end{align}
First we notice that $\ran(\cD_{n})\subset\dom(A^*)=\cH_+$ and, in fact, by Remark \ref{Remark2.4} (2) one has $\cD_{n}\in\cB(\cH, \cH_+)$. We claim that the sequence $\{\cD_n(\zeta)\}$ is Cauchy in $\cB(\cH, \cH_+)$. Let us observe that
\begin{align}
A^*(R_{t_n}(\zeta)-R_{t_m}(\zeta))=\zeta(R_{t_m}(\zeta)-R_{t_n}(\zeta)),
\end{align}
that is $A^*(\cD_n(\zeta)-\cD_m(\zeta))=\zeta(\cD_m(\zeta)-\cD_n(\zeta))$, so for arbitrary $u\in\cH$ we have
\begin{align}
&\|(\cD_{n}(\zeta)-\cD_{m}(\zeta))u\|_{\cH_+}^2=\left\|(\cD_{n}(\zeta)-\cD_{m}(\zeta))u \right\|^2_{\cH}+\left\|A^*(\cD_{n}(\zeta)-\cD_{m}(\zeta))u \right\|^2_{\cH}\\
&=(1+|\zeta|^2)\|(\cD_n(\zeta)-\cD_m(\zeta))u \|^2_{\cH},
\end{align}
thus, one has
\begin{align}
&\|\cD_{n}(\zeta)-\cD_{m}(\zeta)\|_{\cB(\cH, \cH_+)}^2\leq (1+|\zeta|^2)\|\cD_n(\zeta)-\cD_m(\zeta) \|^2_{\cB(\cH)}\underset{m,n\rightarrow \infty}{\rightarrow} 0,
\end{align}
where we used that $\{\cD_{n}\}_{n\in\bbN}$ is Cauchy in $\cB(\cH)$ by \eqref{frstord}, \eqref{resnepr}.  Thus,
the sequence $\cD_{n}$ converges in $\cB(\cH, \cH_+)$ to an operator  $D_{\infty}\in\cB(\cH, \cH_+)$. By definition of $\|\cdot\|_{\cH_+}$, we automatically also infer 
\begin{equation}
\|\cD_{n}(\zeta)-\cD_{\infty}(\zeta)\|_{\cB(\cH)}\rightarrow 0, n\rightarrow\infty. 
\end{equation}
At the same time,  \eqref{frstord} show that
\begin{equation}
\|\cD_{n}(\zeta)-\dot R_{t_0}(\zeta)\|_{\cB(\cH)}\rightarrow 0, n\rightarrow\infty. 
\end{equation}
Hence $\dot R_{t_0}(\zeta)=\cD_{\infty}(\zeta)\in\cB(\cH, \cH_+)$ as asserted in\eqref{dbound}. Also, $\cD_{\infty}(\zeta)$ is independent of the choice of $t_n\rightarrow t_0$. Thus
\begin{equation}
R_t(\zeta)-R_{t_0}(\zeta)= \cD_{\infty}(\zeta)(t-t_0)+o(t-t_0),\ t\rightarrow t_0,
\end{equation} 
and \eqref{vtor} holds. 

{\it Proof of \eqref{18i3}.} The asymptotic formula \eqref{new1.43nn} is obtained by substituting resolvent expansion \eqref{vtor} into Krein's formula \eqref{kreinformula} and grouping powers of $(t-t_0)$. This formal expansion is indeed justified since the operator $\tr \dot R_{t_0}(\zeta)$  is well defined and, by \eqref{vtor}, we have
\begin{align}
&\tr R_t(\zeta)=\tr R_{t_0}(\zeta)+\tr \dot R_{t_0}(\zeta)(t-t_0)+o(t-t_0) \text{\ in\ } \cB(\cH, \bndra).
\end{align}
\end{proof}

In what follows we will describe how a multiple isolated eigenvalue $\lambda$ of $\cA_{t_0}$ bifurcates for $t$ near $t_0$.

\begin{hypothesis}\lb{hyp3.1o}
	Fix $t_0\in[0,1]$, suppose that $\lambda=\lambda(t_0)$ is an isolated eigenvalue of $\cA_{t_0}$ with finite multiplicity $m\in\bbN$. Let 
	\begin{equation}\no
	\gamma:=\big\{z\in\bbC: 2|z-\lambda|=\dist\big(\lambda, \Sp(\cA_{t_0})\setminus\{\lambda\}\big)\big\}.
	\end{equation}
	%and let $B\subset\bbC$ denote the disc enclosed by $\gamma$. 
\end{hypothesis}

We continue to assume Hypothesis \ref{hyp1.3bis}. Then, by Theorem \ref{prop1.8new}, there exists $\varepsilon>0$ such that $\gamma$ encloses the $\lambda$-group of $m$ eigenvalues (not necessarily distinct) of the operator $\cA_t$ whenever $|t-t_0|<\varepsilon$ and $\varepsilon>0$ is sufficiently small.  For such $t$ we let $P_t$ denote the Riesz projection for $\cA_t$ corresponding to the $\lambda$-group,
\begin{equation}\lb{3.1}
P_t:=\frac{-1}{2\pi \bfi}\int_{\gamma} R_t(\zeta)d\zeta, \, \text{ where $R_t(\zeta)=(\cA_t-\zeta)^{-1}$}.
\end{equation}  
Let us recall from \cite[Section III.6.5]{K80} the following Laurent expansion for the resolvent,
\begin{equation}\lb{3.1ab}
R_{t_0}(\zeta)=(\lambda-\zeta)^{-1}P_{t_0}+\sum_{n=0}^{\infty}(\zeta-\lambda)^n S^{n+1}_{t_0}, 
\end{equation}
where $S_{t_0}=\lim_{\zeta\to\lambda}R_{t_0}(\zeta)(I_\cH-P_{t_0})$ is the reduced resolvent of the operator $\cA_{t_0}$ at the eigenvalue $\lambda$ given by the formula
\begin{equation}\lb{3.1a}
S_{t_0}:=\frac{-1}{2\pi \bfi}\int_{\gamma}(\lambda-\zeta)^{-1} R_{t_0}(\zeta)d\zeta.						
\end{equation}	 

\begin{remark} We note that by Theorem \ref{prop1.8new}, part (3) one has 
\begin{equation}(\tr R_{t_0}(\zeta))^*\dot Q_{t_0}J\tr R_{t_0}(\zeta)\in\cB(\cH, \cH_+).
\end{equation}
Therefore, in particular
\begin{align}
\tr (\tr R_{t_0}(\zeta))^*\dot Q_{t_0}J\tr R_{t_0}(\zeta)\in \cB(\cH,\bndra).
\end{align}
Furthermore, since the integrals in \eqref{3.1} and \eqref{3.1a} converge with respect to the norm of the space $\cB(\cH, \cH_+)$, the  two inclusions above also hold with the two copies of $R_{t_0}(\zeta)$ replaced by $S_{t_0}^2$, $S_{t_0}$, $P_{t_0}$ in any combination. In particular, the operators in \eqref{pone}, \eqref{ptwo}, \eqref{T1}, \eqref{T2}, \eqref{XYsecodr}, and in Remark \ref{rem2.14} used below are all well defined and bounded in the respective spaces. 
\end{remark}

The following result gives the second order Taylor expansion for the Riesz projection just introduced.
\begin{proposition}\lb{Proposition1.5} Under Hypotheses \ref{hyp1.3bis} and \ref{hyp3.1o}, for the Riesz projection introduced in \eqref{3.1} one has
\begin{align}
	&P_t\underset{t\rightarrow t_0}{=}P_{t_0}+\dot P_{t_0}(t-t_0)+\frac{1}{2} \ddot P_{t_0}(t-t_0)^2+o(t-t_0)^2,\ \text{\ in\ }\cB(\cH),\lb{new1.43nn2}
\end{align}
with 
\begin{align}
\dot P_{t_0}&:=  (\tr S_{t_0})^*\dot Q_{t_0}J\tr P_{t_0}+(\tr P_{t_0})^*\dot Q_{t_0}J\tr S_{t_0},\lb{pone}
\end{align}
and
\begin{align}
\begin{split}\lb{ptwo}
\ddot P_{t_0}:=&(\tr S_{t_0})^*\ddot Q_{t_0}J\tr P_{t_0}+(\tr P_{t_0})^*\ddot Q_{t_0}J\tr S_{t_0}\\
&+2\big(\tr (\tr S_{t_0})^*\dot Q_{t_0}J\tr S_{t_0}\big)^*\dot Q_{t_0}J\tr P_{t_0}+2\big(\tr (\tr S_{t_0})^*\dot Q_{t_0}J\tr P_{t_0}\big)^*\dot Q_{t_0}J\tr S_{t_0}\\
&+2\big(\tr (\tr P_{t_0})^*\dot Q_{t_0}J\tr S_{t_0}\big)^*\dot Q_{t_0}J\tr S_{t_0}-2\big(\tr (\tr S_{t_0}^2)^*\dot Q_{t_0}J\tr P_{t_0}\big)^*\dot Q_{t_0}J\tr P_{t_0}\\
&-2\big(\tr (\tr P_{t_0})^*\dot Q_{t_0}J\tr S_{t_0}^2\big)^*\dot Q_{t_0}J\tr P_{t_0}-2\big(\tr (\tr P_{t_0})^*\dot Q_{t_0}J\tr P_{t_0}\big)^*\dot Q_{t_0}J\tr S_{t_0}^2.
\end{split}
\end{align}
\end{proposition}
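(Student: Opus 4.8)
The plan is to obtain the expansion of $P_t$ by inserting the resolvent expansion \eqref{new1.43nn} from Theorem \ref{prop1.8new} into the Riesz integral \eqref{3.1} and integrating term by term. First I would note that, by the remark following Hypothesis \ref{hyp3.1o}, the convergence in \eqref{new1.43nn} can be upgraded to convergence in $\cB(\cH,\cH_+)$ uniformly for $\zeta$ on the contour $\gamma$ (this is exactly the content of part (3) of Theorem \ref{prop1.8new} together with the observation that $\gamma$ is a compact set disjoint from $\Sp(\cA_{t_0})$, hence from $\Sp(\cA_t)$ for $t$ near $t_0$). Consequently
\begin{equation}\no
P_t = \frac{-1}{2\pi\bfi}\int_\gamma R_{t_0}(\zeta)\,d\zeta + (t-t_0)\frac{-1}{2\pi\bfi}\int_\gamma \dot R_{t_0}(\zeta)\,d\zeta + \frac{(t-t_0)^2}{2}\cdot\frac{-1}{2\pi\bfi}\int_\gamma \ddot R_{t_0}(\zeta)\,d\zeta + o(t-t_0)^2,
\end{equation}
so that $\dot P_{t_0} = \frac{-1}{2\pi\bfi}\int_\gamma \dot R_{t_0}(\zeta)\,d\zeta$ and $\ddot P_{t_0} = \frac{-1}{2\pi\bfi}\int_\gamma \ddot R_{t_0}(\zeta)\,d\zeta$, with the $o(t-t_0)^2$ remainder controlled uniformly on $\gamma$ by compactness.

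The substance of the proof is then a residue computation. I would substitute the Laurent expansion \eqref{3.1ab}, i.e. $R_{t_0}(\zeta) = (\lambda-\zeta)^{-1}P_{t_0} + \sum_{n\ge 0}(\zeta-\lambda)^n S_{t_0}^{n+1}$, into the formulas \eqref{derR} and \eqref{dderR} for $\dot R_{t_0}(\zeta)$ and $\ddot R_{t_0}(\zeta)$. Each of these is a finite sum of terms of the shape $(\tr B_1(\zeta))^* \dot Q_{t_0} J \tr B_2(\zeta)$ (for $\dot R_{t_0}$) or a nested version thereof (for $\ddot R_{t_0}$), where $B_1(\zeta), B_2(\zeta)$ are themselves $R_{t_0}(\zeta)$ or $\dot R_{t_0}(\zeta)$. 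Expanding each factor in its Laurent series and multiplying out, one gets a Laurent series in $(\zeta-\lambda)$ whose coefficients are the various bilinear expressions in $P_{t_0}, S_{t_0}, S_{t_0}^2$ appearing in \eqref{pone}, \eqref{ptwo}. The contour integral $\frac{-1}{2\pi\bfi}\int_\gamma$ then simply extracts the coefficient of $(\zeta-\lambda)^{-1}$: a factor $(\lambda-\zeta)^{-1}$ contributes its operator with a sign, a factor $(\lambda-\zeta)^{-2}$ (which arises when two $P_{t_0}$-principal parts collide, or from $\tr$ applied to a first-order term that already carries one $(\lambda-\zeta)^{-1}$) contributes the $S_{t_0}^2$ terms after noting $\frac{-1}{2\pi\bfi}\int_\gamma(\lambda-\zeta)^{-2}(\cdot)\,d\zeta = 0$ unless paired against another $(\lambda-\zeta)^{-1}$, and so on. The bookkeeping is what produces the six ``$S_{t_0}$--$P_{t_0}$ mixed'' terms with coefficient $2$ and the three ``$S_{t_0}^2$--$P_{t_0}$'' terms with coefficient $-2$ in \eqref{ptwo}; the sign pattern comes from the identity $\frac{-1}{2\pi\bfi}\int_\gamma (\lambda-\zeta)^{-1}(\zeta-\lambda)^{k}\,d\zeta = \delta_{k,0}$ and the fact that products $(\lambda-\zeta)^{-1}(\lambda-\zeta)^{-1}$ integrate to zero against the surrounding structure.

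The main obstacle I anticipate is the careful tracking of which Laurent coefficients survive the double contour structure hidden inside the term $(\tr\dot R_{t_0}(\zeta))^*\dot Q_{t_0}J\tr R_{t_0}(\zeta)$ of \eqref{dderR}: here $\dot R_{t_0}(\zeta)$ itself is a bilinear expression in $R_{t_0}(\zeta)$, so that term is really quadrilinear in copies of $R_{t_0}(\zeta)$, and one must expand three (after using self-adjoint-extension structure, effectively two independent) Laurent series and collect the residue correctly, keeping in mind that $\tr$ is a bounded operator that commutes with the contour integral but must be applied \emph{before} taking adjoints. A secondary point requiring care is that $\tr P_{t_0}$, $\tr S_{t_0}$, $\tr S_{t_0}^2$ and the adjoints of $\tr$ applied to the various bounded-into-$\cH_+$ operators are all well defined — but this is precisely guaranteed by the remark preceding the proposition, so I would simply cite it. Once the residue bookkeeping is organized (most cleanly by writing $R_{t_0}(\zeta) = (\lambda-\zeta)^{-1}P_{t_0} + H(\zeta)$ with $H$ holomorphic near $\lambda$, $H(\lambda)=S_{t_0}$, $H'(\lambda)=S_{t_0}^2$, and discarding in advance every product of three or more $(\lambda-\zeta)^{-1}$ factors as contributing a pole of order $\ge 3$ whose relevant residue against the rest vanishes), the formulas \eqref{pone} and \eqref{ptwo} drop out by matching coefficients, completing the proof.
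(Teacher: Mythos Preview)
Your approach is correct and is essentially identical to the paper's: integrate the second-order resolvent expansion \eqref{new1.43nn} over $\gamma$ term by term (using uniformity on the compact contour), then substitute the Laurent series \eqref{3.1ab} into \eqref{derR}, \eqref{dderR} and read off the coefficient of $(\lambda-\zeta)^{-1}$ as the residue. Two cosmetic remarks: the second-order expansion you need is the $\cB(\cH)$ statement of part~(4), not the $\cB(\cH,\cH_+)$ statement of part~(3) (the latter is only required so that $\tr\dot R_{t_0}(\zeta)$ is well defined, which you correctly note is handled by the preceding remark); and the term count in \eqref{ptwo} is three ``$+2$'' terms and three ``$-2$'' terms, not six and three.
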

\begin{proof} 
	We plug \eqref{3.1ab} in \eqref{derR}, \eqref{dderR} and observe that the coefficients of the terms containing  $(\lambda-\zeta)^{-1}$  in the Laurent expansions of $\dot{R}_{t_0}(\zeta)$, $\ddot{R}_{t_0}(\zeta)$ are given by the right-hand sides of \eqref{pone}, \eqref{ptwo} respectively. Upon using the right hand sides of  \eqref{pone}, \eqref{ptwo}, in the Laurent  expansions of $\dot R_{t_0}(\zeta)$, $\ddot R_{t_0}(\zeta)$, we multiply both sides of \eqref{new1.43nn} by $\frac{-1}{2\pi \bfi}$, integrate over $\gamma$, and employ \eqref{3.1} together with Cauchy's integral formula to arrive at \eqref{new1.43nn2}.  In other words, $\dot{P}_{t_0}$, $\ddot{P}_{t_0}$ are residues of the operator-valued functions $\zeta\mapsto -\dot{R}_{t_0}(\zeta)$, $\zeta\mapsto-\ddot{R}_{t_0}(\zeta)$; the residues are given by \eqref{pone}, \eqref{ptwo}.
\end{proof}
Our next objective is to derive an asymptotic expansion for the operator that is similar to $P_t\cA_tP_t$ for $t$ near $t_0$ but acts on a $t$-independent  $m-$dimensional space $\ran(P_{t_0})$. To that end, we introduce the operator $U_t$  mapping $\ran(P_{t_0})$ onto $\ran(P_t)$ unitarily, for $t$ near $t_0$, and defined by the formulas  
\begin{align}
\begin{split}\lb{int27}
	&U_t:=(I-D^2_t)^{-1/2}\big((I-P_t)(I-P_{t_0})+P_tP_{t_0}\big),\\
	&U_t^{-1}=\big((I-P_{t_0})(I-P_t)+P_{t_0}P_t\big)(I-D^2_t)^{-1/2},
\end{split}
\end{align}
where $D_t:=P_t-P_{t_0}$ is invertible (for $t$ near $t_0$) since $\|D_t \|_{\cB(\cH)}\underset{t\rightarrow t_0}{=}o(1)$ by \eqref{new1.43nn2}. In particular, one has $U_tP_{t_0}=P_tU_t$. Given the auxiliary operators $U_t$ we consider an $m-$dimensional operator $W_t:=P_{t_0}U_t^{-1}\cA_tU_tP_{t_0}$ and obtain its second order Taylor expansion next. Here and in what follows we identify $W_t$ and its restriction to $\ran(P_{t_0})$. 
\begin{theorem}\lb{theorem2.2} Assume Hypotheses \ref{hyp1.3bis} and \ref{hyp3.1o}  and let $W_t:=P_{t_0}U_t^{-1}\cA_tU_tP_{t_0}$ for $t$ near $t_0$. Then the following second order Taylor expansion holds, 
\begin{align}
\begin{split}
\lb{1.42n}
W_t\underset{t\rightarrow t_0}{=}W_{t_0}+\dot W_{t_0}(t-t_0)+\frac12\ddot W_{t_0}(t-t_0)^2+o(t-t_0)^2,
\end{split}
\end{align}
where $W_{t_0}:=\lambda P_{t_0}$ and 
\begin{align}
&\dot W_{t_0}:=(TP_{t_0})^*J\dot Q_{t_0} TP_{t_0},\lb{T1}\\
&\ddot W_{t_0}:=(TP_{t_0})^*J\ddot Q_{t_0} TP_{t_0}-2\big(\tr (\tr S_{t_0})^*\dot Q_{t_0}J\tr P_{t_0}\big)^*\dot Q_{t_0}J\tr P_{t_0}.\lb{T2}
\end{align}
\end{theorem}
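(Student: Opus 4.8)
The plan is to obtain the expansion of $W_t = P_{t_0} U_t^{-1} \cA_t U_t P_{t_0}$ by combining the already-established second order expansions of the Riesz projection $P_t$ (Proposition \ref{Proposition1.5}) and the resolvent $R_t(\zeta)$ (Theorem \ref{prop1.8new}), together with a contour-integral representation of $\cA_t P_t$. First I would write $\cA_t P_t = \frac{-1}{2\pi\bfi}\int_\gamma \zeta R_t(\zeta)\,d\zeta$, which follows from \eqref{3.1} since $\cA_t P_t = \frac{-1}{2\pi\bfi}\int_\gamma \cA_t R_t(\zeta)\,d\zeta$ and $\cA_t R_t(\zeta) = I + \zeta R_t(\zeta)$, with the $I$-term integrating to zero over the closed contour $\gamma$. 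Substituting the Laurent expansion \eqref{3.1ab} and the formulas \eqref{derR}, \eqref{dderR} for $\dot R_{t_0}, \ddot R_{t_0}$, the residues of $\zeta\mapsto -\zeta \dot R_{t_0}(\zeta)$ and $\zeta\mapsto -\zeta\ddot R_{t_0}(\zeta)$ produce explicit operators $\dot B_{t_0}, \ddot B_{t_0}$ built from $P_{t_0}, S_{t_0}, S_{t_0}^2$ and the trace maps; this gives $\cA_t P_t = \lambda P_{t_0} + (\lambda \dot P_{t_0} + \dot B_{t_0})(t-t_0) + \cdots$ in $\cB(\cH)$, using the reduced-resolvent identities $P_{t_0} S_{t_0} = S_{t_0} P_{t_0} = 0$ to simplify.

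Next I would expand the conjugation by $U_t$. From \eqref{int27} and $\|D_t\|_{\cB(\cH)} = o(1)$ one has the Neumann-type expansion $(I - D_t^2)^{-1/2} = I + \frac12 D_t^2 + o(\|D_t\|^2)$, and since $D_t = \dot P_{t_0}(t-t_0) + \frac12\ddot P_{t_0}(t-t_0)^2 + o(t-t_0)^2$ with $D_t^2 = \dot P_{t_0}^2 (t-t_0)^2 + o(t-t_0)^2$, I get second order expansions for $U_t$ and $U_t^{-1}$ with $\dot U_{t_0} = \dot P_{t_0}P_{t_0} - (I-P_{t_0})\dot P_{t_0}$ (the standard Kato formula) and an analogous explicit $\ddot U_{t_0}$. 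Then I multiply the three expansions $U_t^{-1}$, $\cA_t P_t$ (noting $\cA_t U_t P_{t_0} = \cA_t P_t U_t P_{t_0}$ since $U_t P_{t_0} = P_t U_t$ and $P_t \cA_t P_t = \cA_t P_t$), and the remaining $P_{t_0}$ on the left, and collect powers of $(t-t_0)$. The zeroth order term is $P_{t_0}(\lambda P_{t_0})P_{t_0} = \lambda P_{t_0}$. The first order term is $P_{t_0}\big(\lambda \dot U_{t_0}^{-1}P_{t_0} + \lambda \dot P_{t_0} + \dot B_{t_0} + \lambda P_{t_0}\dot U_{t_0}\big)P_{t_0}$; using $P_{t_0}\dot P_{t_0}P_{t_0} = 0$ (differentiate $P_{t_0}^2 = P_{t_0}$ and sandwich) together with $P_{t_0}\dot U_{t_0}P_{t_0} = P_{t_0}\dot U_{t_0}^{-1}P_{t_0} = 0$, this collapses to $P_{t_0}\dot B_{t_0}P_{t_0}$, which I would then match against the claimed right-hand side $(TP_{t_0})^* J\dot Q_{t_0} TP_{t_0}$ of \eqref{T1} by inserting $\tr^* \tr$-identities and using $P_{t_0}S_{t_0} = 0$ to kill the $S_{t_0}$-contributions. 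The second order term is the sum of the pure $\ddot B_{t_0}$-type contribution, the $\dot U$-$\dot B$ cross terms, and the $\ddot U$, $\dot U^2$ terms; after repeatedly applying the relations $P_{t_0}\dot P_{t_0}P_{t_0} = 0$, $P_{t_0}S_{t_0} = S_{t_0}P_{t_0} = 0$, $P_{t_0}\dot P_{t_0}^2 P_{t_0} = P_{t_0}\dot P_{t_0}(I-P_{t_0})\dot P_{t_0}P_{t_0}$, and the explicit form \eqref{pone} of $\dot P_{t_0}$, the many terms should telescope down to the two-term expression \eqref{T2}.

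I expect the main obstacle to be the bookkeeping in the second order term: $\ddot B_{t_0}$, the cross terms $\dot U_{t_0}^{-1}(\lambda\dot P_{t_0}+\dot B_{t_0})$ and $(\lambda\dot P_{t_0}+\dot B_{t_0})\dot U_{t_0}$, and the $\ddot U_{t_0}$ and $(\dot U_{t_0})^2$ terms collectively produce a large number of summands involving $P_{t_0}, S_{t_0}, S_{t_0}^2$ and up to two copies each of $\dot Q_{t_0}, J, \tr$, and showing that almost all of them cancel — leaving only $(TP_{t_0})^* J\ddot Q_{t_0} TP_{t_0}$ and $-2\big(\tr(\tr S_{t_0})^*\dot Q_{t_0}J\tr P_{t_0}\big)^*\dot Q_{t_0}J\tr P_{t_0}$ — requires systematic use of the idempotent and reduced-resolvent identities and the explicit formula for $\dot P_{t_0}$. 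A secondary technical point is justifying that all products and contour integrals are legitimate in $\cB(\cH,\cH_+)$ (hence a fortiori in $\cB(\cH)$), which is already guaranteed by Theorem \ref{prop1.8new}, Proposition \ref{Proposition1.5}, and the remark preceding the proposition; the conjugation by the bounded operators $U_t^{\pm 1}$ and the extra $P_{t_0}$ on each side preserve these mapping properties. Once the algebra is organized — for instance by first reducing every operator to its "sandwiched" form $P_{t_0}(\cdot)P_{t_0}$ before expanding — the identification with \eqref{T1}–\eqref{T2} is a finite, if lengthy, verification.
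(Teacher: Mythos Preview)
Your approach is correct in principle and shares the paper's starting point—the contour representation $\cA_tP_t=\frac{-1}{2\pi\bfi}\int_\gamma\zeta R_t(\zeta)\,d\zeta$—but the organization differs. The paper does \emph{not} first expand $\cA_tP_t$ and then conjugate by $U_t^{\pm1}$; instead it conjugates the resolvent first, considers $P_{t_0}U_t^{-1}R_t(\zeta)U_tP_{t_0}$, and inserts $I=P_{t_0}+(I-P_{t_0})$ on each side of $R_t(\zeta)$ to split the expression into four pieces $\tau_1,\dots,\tau_4$. Each $\tau_k$ requires only partial information about $U_t$—specifically $P_{t_0}U_tP_{t_0}$ to order $(t-t_0)^2$ and $P_{t_0}U_t^{-1}(I-P_{t_0})$ to order $(t-t_0)$—and after integrating against $\frac{-\zeta}{2\pi\bfi}$, the contributions of $\tau_2,\tau_3$ are just $\lambda\mathfrak S_2(t-t_0)^2$ each while $\tau_4$ is $o(t-t_0)^2$, cancelling cleanly against matching terms in $\tau_1$. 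This decomposition is the paper's main bookkeeping device and avoids ever writing a full expansion of $U_t$; your route of expanding $U_t,U_t^{-1},\cA_tP_t$ separately and multiplying three second-order Taylor series reaches the same destination but with noticeably heavier algebra, as you yourself anticipate.

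One small slip: your formula $\dot U_{t_0}=\dot P_{t_0}P_{t_0}-(I-P_{t_0})\dot P_{t_0}$ is identically zero, since differentiating $P_t^2=P_t$ gives $(I-P_{t_0})\dot P_{t_0}=\dot P_{t_0}P_{t_0}$. The correct expression is $\dot U_{t_0}=\dot P_{t_0}P_{t_0}-P_{t_0}\dot P_{t_0}$; fortunately this does not affect the conclusion $P_{t_0}\dot U_{t_0}P_{t_0}=0$ that you actually use.
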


\begin{proof} 
	Our strategy is to expand the function
	\begin{equation}\lb{sim}
	t\mapsto P_{t_0}U_t^{-1}R_t(\zeta)U_tP_{t_0},
	\end{equation} 
	uniformly for $\zeta$ near $\lambda$, to the second order in $(t-t_0)$, then multiply both sides of the obtained expansion by $\frac{-1}{2\pi\bfi}\zeta$, integrate over $\gamma$, and use the formula
\begin{equation}\lb{hpt}
\cA_tP_t=\frac{-1}{2\pi \bfi}\int_{\gamma} \zeta R_t(\zeta)d\zeta.
\end{equation}
In order to obtain the expansion for the operator in \eqref{sim}, we split it into four terms,   
\begin{align}\begin{split}
&P_{t_0}U_t^{-1}R_t(\zeta)U_tP_{t_0}=\\
\begin{split}
&=P_{t_0}U_t^{-1}P_{t_0}R_t(\zeta)P_{t_0}U_tP_{t_0}+P_{t_0}U_t^{-1}(I-P_{t_0})R_t(\zeta)P_{t_0}U_tP_{t_0}\\
&\quad+P_{t_0}U_t^{-1}P_{t_0}R_t(\zeta)(I-P_{t_0})U_tP_{t_0}+P_{t_0}U_t^{-1}(I-P_{t_0})R_t(\zeta)(I-P_{t_0})U_tP_{t_0}\lb{fourterms}
\end{split}\\
&=\tau_1(\zeta)+\tau_2(\zeta)+\tau_3(\zeta)+\tau_4(\zeta),
\end{split}
\end{align}
where $\tau_k(\zeta)$ denotes the respective term in \eqref{fourterms}.
The remaining part of the argument is divided into three steps. In Steps 1 and 2, we derive auxiliary expansions in $t-t_0$. Step 3  gives asymptotic expansion for integrated versions of $\tau_k(\zeta)$.

{\bf Step 1.} Let us denote
\begin{align}\lb{cs}
\begin{split}
&\mathfrak S_0:=(\tr P_{t_0})^*\dot Q_{t_0}J\tr S_{t_0},\\
 \mathfrak S_1:=\big(\tr (\tr S_{t_0})^*\dot Q_{t_0}J\tr P_{t_0}\big)^*&\dot Q_{t_0}J\tr P_{t_0}, \mathfrak S_2: =\big(\tr (\tr S_{t_0}^2)^*\dot Q_{t_0}J\tr P_{t_0}\big)^*\dot Q_{t_0}J\tr P_{t_0}. 
\end{split}
\end{align}
Then one has
	\begin{align}
	P_{t_0}P_tP_{t_0}&=P_{t_0}-\mathfrak S_2(t-t_0)^2+o(t-t_0)^2,\lb{ptp}\\
	P_{t_0}P_t&=P_{t_0}+\mathfrak S_0(t-t_0)+o(t-t_0).\lb{pt}
	\end{align}
	\begin{proof}[Proof of \eqref{ptp}, \eqref{pt}.]
	To prove \eqref{ptp}, we multiply both sides of \eqref{new1.43nn2} by $P_{t_0}$ from the left and right sides and use  \eqref{pone}, \eqref{ptwo}, and the identities $S_{t_0}P_{t_0}=0$ and $P_{t_0}(\tr S_{t_0})^*=(\tr S_{t_0}P_{t_0})^*$. Similarly, one obtains \eqref{pt}. 
	\end{proof}
	
	{\bf Step 2.} One has 
	\begin{align}
	& P_{t_0}U_tP_{t_0}=P_{t_0}-\frac{1}{2}\mathfrak S_2(t-t_0)^2+o(t-t_0)^2,\lb{pup}\\
	&P_{t_0}U^{-1}_t(I-P_{t_0})=\mathfrak S_0(t-t_0)+o(t-t_0),\lb{pu1p}
	\end{align}
	where we used notation \eqref{cs}. 
	\begin{proof}[Proof of \eqref{pup}, \eqref{pu1p}]
	Let us derive \eqref{pup}. First, recalling $U_t$ from \eqref{int27} we get
	\begin{align}
	P_{t_0}U_tP_{t_0}&=P_{t_0}(I-D^2_t)^{-1/2}P_tP_{t_0}=P_{t_0}(I+\frac{1}{2}D^2_t+o(t-t_0)^2)P_tP_{t_0}\\
	&=P_{t_0}P_tP_{t_0}+\frac{1}{2}P_{t_0}D^2_tP_tP_{t_0}+o(t-t_0)^2.\lb{cuteq}
	\end{align}
	Next, since $D_t=P_t-P_{t_0}=\dot P_{t_0}(t-t_0)+o(t-t_0)$, we have
	\begin{align}
	\begin{split}\lb{pdsp}
	&P_{t_0}D^2_tP_tP_{t_0}=P_{t_0}(\dot P_{t_0}^2(t-t_0)^2+o(t-t_0)^2)(P_{t_0}+\dot P_{t_0}(t-t_0)+o(t-t_0))P_{t_0}\\
	&=P_{t_0}\dot P_{t_0}^2P_{t_0}(t-t_0)^2+o(t-t_0)^2=\mathfrak S_2(t-t_0)^2+o(t-t_0)^2,
	\end{split}
	\end{align}
	where in the last equality we utilized \eqref{pone} and $S_{t_0}P_{t_0}=0$.
	 Then \eqref{pup} can be inferred from \eqref{ptp}, \eqref{cuteq}, \eqref{pdsp}.

To prove \eqref{pu1p}, we use  \eqref{int27}, \eqref{pt} and get
	\begin{align}
	P_{t_0}U^{-1}_t(I-P_{t_0})&=P_{t_0}(I-P_{t_0}-P_t+2P_{t_0}P_t+\cO(t-t_0)^2)(I-P_{t_0})\\
	&=P_{t_0}P_t (I-P_{t_0})+o(t-t_0)=\mathfrak S_0(t-t_0)+o(t-t_0),
	\end{align}
	where we used $\mathfrak S_0(1-P_{t_0})=\mathfrak S_0$. 
	\end{proof}
	{\bf Step 3.} 
Let us denote \begin{align}
&\mathfrak Q_1: =(\tr P_{t_0})^*\dot Q_{t_0}J\tr P_{t_0},\  \mathfrak Q_2: =(\tr P_{t_0})^*\ddot Q_{t_0}J\tr P_{t_0},
\end{align}
recall $\tau_k(\zeta)$ introduced in \eqref{fourterms}, and $\mathfrak S_0$, $\mathfrak S_1$, $\mathfrak S_2$ from \eqref{cs}. Then we have
\begin{align}\lb{A1}
&\frac{-1}{2\pi \bfi}\int_{\gamma} \zeta \tau_1(\zeta)d\zeta=\lambda P_{t_0}-\mathfrak Q_1(t-t_0)-\Big(\frac{1}{2} \mathfrak Q_2+2\lambda\mathfrak S_2+\mathfrak S_1\Big)(t-t_0)^2+o(t-t_0)^2,\\\lb{A2}
&\frac{-1}{2\pi \bfi}\int_{\gamma} \zeta \tau_k(\zeta)d\zeta=\lambda\mathfrak S_2(t-t_0)^2+o(t-t_0)^2,\ k=2,3,\\
& \frac{-1}{2\pi \bfi}\int_{\gamma} \zeta \tau_4(\zeta)d\zeta=o(t-t_0)^2.\lb{A4}
\end{align}
\begin{proof}[Proof of \eqref{A1}, \eqref{A2}]
To prove \eqref{A1} we will combine the second order Taylor expansions \eqref{pup} and \eqref{new1.43nn} with respect to parameter $t$, collect powers of $(\lambda-\zeta)$, and contour integrate with respect to $\zeta$ as follows. 

Introducing $\tilde{\mathfrak R}(\zeta):=(\tr (\tr R_{t_0}(\zeta))^*\dot Q_{t_0}J\tr P_{t_0})^*\dot Q_{t_0}J\tr P_{t_0}$ and noting  that by conjugation $P_{t_0}U_t^{-1}P_{t_0}$ admits the same asymptotic as $P_{t_0}U_tP_{t_0}$), we infer
\begin{align}
&\tau_1(\zeta)=P_{t_0}\Big(P_{t_0}-\frac{1}{2}\mathfrak S_2(t-t_0)^2+o(t-t_0)^2\Big)P_{t_0}\times\\
&\times P_{t_0}\Big((\lambda-\zeta)^{-1}P_{t_0}+(\lambda-\zeta)^{-2}\mathfrak Q_1(t-t_0) +\big(\frac{1}{2}(\lambda-\zeta)^{-2}\mathfrak Q_2+(\lambda-\zeta)^{-2}\tilde {\mathfrak R}(\zeta)\big)(t-t_0)^2\Big)P_{t_0}\times\\
&\times\Big(P_{t_0}-\frac{1}{2}\mathfrak S_2(t-t_0)^2+o(t-t_0)^2\Big)P_{t_0},
\end{align}
hence,
\begin{align}
\begin{split}\lb{tau1}
&\tau_1(\zeta)=(\lambda-\zeta)^{-1}P_{t_0}+(\lambda-\zeta)^{-2}\mathfrak Q_1(t-t_0)\\
&+\Big(\frac12(\lambda-\zeta)^{-2}\mathfrak Q_2-(\lambda-\zeta)^{-1}\mathfrak S_2+ (\lambda-\zeta)^{-2}\tilde {\mathfrak R}(\zeta)\Big)(t-t_0)^2+o(t-t_0)^2.
\end{split}
\end{align}
The Laurent expansion \eqref{3.1ab} yields
\begin{equation}
\frac{-1}{2\pi\bfi}\int_{\gamma}\zeta(\lambda-\zeta)^{-2}R_{t_0}(\zeta)d\zeta=-S_{t_0}-\lambda S_{t_0}^2,
\end{equation}
and, consequently,
\begin{equation}\lb{integ1}
\frac{-1}{2\pi\bfi}\int_{\gamma}\zeta(\lambda-\zeta)^{-2}\tilde {\mathfrak R}(\zeta)d\zeta=-\mathfrak S_1-\lambda\mathfrak S_2.
\end{equation}
Multiplying both sides of \eqref{tau1} by $\frac{-1}{2\pi\bfi}\zeta$, integrating over $\gamma$, and using \eqref{integ1} we obtain  \eqref{A1}. 

Let us derive \eqref{A2} for $k=2$ (the case $k=3$ follows by conjugation). Using Taylor expansions \eqref{new1.43nn}, \eqref{pu1p} and denoting $\hatt{\mathfrak R}(\zeta):=(\tr R_{t_0}(\zeta))^*\dot Q_{t_0}J\tr S_{t_0}$  we arrive at
\begin{align}
&\tau_2(\zeta)=\big(\mathfrak S_0(t-t_0)+o(t-t_0)\big)(I-P_{t_0})\big((\lambda-\zeta)^{-1}\hatt{\mathfrak R}(\zeta)(t-t_0)+o(t-t_0)\big)P_{t_0}\\
&\hspace{1cm}\times \big(P_{t_0}-\frac{1}{2}\mathfrak S_2(t-t_0)^2+o(t-t_0)^2\big),
\end{align}
and, consequently, infer
\begin{align}\lb{tau2}
\tau_2(\zeta)&=(\lambda-\zeta)^{-1}\mathfrak S_0\hatt{\mathfrak R}(\zeta)(t-t_0)^2+o(t-t_0)^2.
\end{align}
Let us observe that
\begin{equation}
\frac{-1}{2\pi\bfi}\int_{\gamma}\zeta(\lambda-\zeta)^{-1}R_{t_0}(\zeta)d\zeta=-P_{t_0}+\lambda S_{t_0},\ S_{t_0} P_{t_0}=0,
\end{equation}
thus
\begin{equation}\lb{integ2}
\frac{-1}{2\pi\bfi}\int_{\gamma}\zeta(\lambda-\zeta)^{-1}\mathfrak S_0\hatt{\mathfrak R}(\zeta)d\zeta=\lambda\mathfrak S_2,
\end{equation}
where we used $S_{t_0} P_{t_0}=0$. Multiplying both sides of  \eqref{tau2} by $\frac{-1}{2\pi\bfi}\zeta$, integrating over $\gamma$ and using \eqref{integ2}we obtain
\begin{align}
\frac{-1}{2\pi \bfi}\int_{\gamma} \zeta \tau_2(\zeta)d\zeta=\lambda\mathfrak S_2(t-t_0)^2+o(t-t_0)^2.
\end{align}

Let us prove \eqref{A4}. Using the expansion of the resolvent \eqref{new1.43nn} to the first order, \eqref{pu1p} and its conjugate version, denoting $\mathfrak R(\zeta)=(\tr R_{t_0}(\zeta))^*\dot Q_{t_0}J \tr R_{t_0}(\zeta)$ we obtain
\begin{align}
&\tau_4(\zeta)=\big(\mathfrak S_0(t-t_0)+o(t-t_0)\big)((I-P_{t_0}) R_{t_0}(\zeta)(I-P_{t_0})+\mathfrak R(\zeta)(t-t_0)+o(t-t_0))\\
&\quad \times\big(\mathfrak S_0^*(t-t_0)+o(t-t_0)\big)=\mathfrak S_0(I-P_{t_0}) R_{t_0}(\zeta)(I-P_{t_0})\mathfrak S_0^*(t-t_0)^2+o(t-t_0)^2
\end{align}
Multiplying both sides of the above equation by $\frac{-1}{2\pi\bfi}\zeta$, integrating over $\gamma$ and noticing that the function $\zeta\mapsto (I-P_{t_0}) R_{t_0}(\zeta)(I-P_{t_0})$ is analytic with respect to $\zeta$ near $\lambda$ we obtain \eqref{A4}.\end{proof}

Finally, to prove the main assertion, \eqref{1.42n}, one combines \eqref{A1}--\eqref{A4} (add up all equations) together with \eqref{fourterms}, \eqref{hpt}. 
\end{proof}
We are now ready to present our next principal result, a version of what is sometimes called the Kato Selection Theorem, cf.\ \cite[Theorem II.5.11]{K80}, that allows one to compute the second order Taylor expansion of the appropriately chosen eigenvalue curves bifurcating from a multiple eigenvalue.
\begin{theorem}\lb{EVExp}
Assuming Hypotheses \ref{hyp1.3bis} and \ref{hyp3.1o}, we let $W_t=P_{t_0}U_t^{-1}\cA_tU_tP_{t_0}$ and suppose that $\dot W_{t_0}$, considered as an operator in $\ran(P_{t_0})$, has $m'\leq m$  distinct eigenvalues denoted by $\mu_1, ... , \mu_{m'}$. We  denote the corresponding to $\mu_i$ eigenprojection of  $\dot W_{t_0}$ in $\ran(P_{t_0})$ by $P^{(i)}_{t_0}$, $1\leq  i\leq m'$. Furthermore, for each $i\leq m'$ we suppose that $P^{(i)}_{t_0}\ddot W_{t_0} P^{(i)}_{t_0}$ considered as an operator in $\ran(P^{(i)}_{t_0})$ has $m'_i\leq m$  distinct eigenvalues denoted by $\nu_{i,1},..., \nu_{i, m'_i} $.  Then for $t$ near $t_0$ there is a labeling $\lambda_{i, k}(t)$ of the eigenvalue curves of the operator $\cA_tP_t$ or $W_t$ such that
\begin{align}\lb{razlozhenie}
\lambda_{i, k}(t)=\lambda+\mu_i(t-t_0)+\frac 12\nu_{i, k}(t-t_0)^2+o(t-t_0)^2,\  k=1,.., m'_i, i=1,..., m'. 
\end{align} 
Moreover, there is an orthonormal set of eigenvectors $u_{i,k}$ of the operators $P^{(i)}_{t_0}\ddot W_{t_0} P^{(i)}_{t_0}$ (and therefore of the operators  $\dot{W}_{t_0}$, $W_{t_0}$ and $\cA_{t_0}$), 
\[\{u_{i,k}\in\ran(P^{(i)}_{t_0})):  k=1,.., m'_i, \, i=1,..., m'\}\subset \ran(P_{t_0}),\] such that 
\begin{align}
\begin{split}\lb{razl}
&\lambda_{i, k}(t)=\lambda
+\omega(\dot Q_{t_0}\tr u_{i,k}, \tr u_{i,k})(t-t_0)\\
&\quad+\frac 12\Big(\omega(\ddot Q_{t_0}\tr u_{i,k}, \tr u_{i,k})-2\langle \tr (\tr S_{t_0})^*\dot Q_{t_0}J\tr P_{t_0}u_{i,k}, \dot Q_{t_0}J\tr P_{t_0}u_{i,k}\rangle_{\bndra}\Big)(t-t_0)^2\\
&\quad\quad+o(t-t_0)^2,\   k=1,.., m'_i,\, i=1,..., m', 
\end{split}
\end{align} 
where $\tr\in\cB(\cH_+,\mathfrak{H}\times\mathfrak{H})$ is the trace operator, the symplectic form $\omega$ is defined in \eqref{5.3},  $S_{t_0}$ is the reduced resolvent of the operator $\cA_{t_0}$ at $\lambda$.
\end{theorem}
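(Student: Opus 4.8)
The plan is to combine the abstract Kato Selection Theorem (\cite[Theorem II.5.11]{K80}) applied to the family $W_t$ with the explicit formulas for $\dot W_{t_0}$ and $\ddot W_{t_0}$ obtained in Theorem~\ref{theorem2.2}. First I would invoke Theorem~\ref{theorem2.2}, which gives the $C^2$ Taylor expansion $W_t = \lambda P_{t_0} + \dot W_{t_0}(t-t_0) + \tfrac12 \ddot W_{t_0}(t-t_0)^2 + o(t-t_0)^2$ in the finite-dimensional space $\ran(P_{t_0})$. Since $W_t$ is self-adjoint on $\ran(P_{t_0})$ (it is conjugate via the unitary $U_t$ to the restriction of the self-adjoint operator $\cA_t$ to the spectral subspace $\ran(P_t)$), and the expansion is in fact analytic-type to second order, the reduction process of Kato applies: at the first stage one splits $\ran(P_{t_0})$ according to the distinct eigenvalues $\mu_1,\dots,\mu_{m'}$ of $\dot W_{t_0}$ with eigenprojections $P^{(i)}_{t_0}$; at the second stage one splits each $\ran(P^{(i)}_{t_0})$ according to the distinct eigenvalues $\nu_{i,1},\dots,\nu_{i,m'_i}$ of $P^{(i)}_{t_0}\ddot W_{t_0} P^{(i)}_{t_0}$. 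Standard second-order Rayleigh--Schrödinger perturbation theory for the finite-dimensional self-adjoint family then yields a labeling of the eigenvalue curves with
\[
\lambda_{i,k}(t) = \lambda + \mu_i (t-t_0) + \tfrac12 \nu_{i,k}(t-t_0)^2 + o(t-t_0)^2,
\]
which is \eqref{razlozhenie}. The eigenvectors $u_{i,k}$ are produced as an orthonormal basis of $\ran(P_{t_0})$ diagonalizing simultaneously (in the block sense) $\dot W_{t_0}$ and the $P^{(i)}_{t_0}\ddot W_{t_0} P^{(i)}_{t_0}$, hence also $W_{t_0}=\lambda P_{t_0}$ and $\cA_{t_0}$ restricted to $\ran(P_{t_0})$.

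Next I would translate the coefficients into symplectic language. For the linear term, since $u_{i,k}\in\ran(P^{(i)}_{t_0})\subset\ran(P_{t_0})$, we have $\mu_i = \langle \dot W_{t_0} u_{i,k}, u_{i,k}\rangle_{\cH}$. Using \eqref{T1}, $\dot W_{t_0} = (TP_{t_0})^* J \dot Q_{t_0} TP_{t_0}$, and $P_{t_0}u_{i,k}=u_{i,k}$, a direct computation gives
\[
\mu_i = \langle J\dot Q_{t_0}\tr u_{i,k}, \tr u_{i,k}\rangle_{\bndra} = \omega(\dot Q_{t_0}\tr u_{i,k}, \tr u_{i,k}),
\]
by the definition \eqref{5.3} of $\omega$ (note $J$ is skew-adjoint, so one must be slightly careful about the order of arguments; the identity $\langle Jf,g\rangle = \omega(f,g)$ from \eqref{5.3} is exactly what is needed). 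For the quadratic term, $\nu_{i,k} = \langle \ddot W_{t_0} u_{i,k}, u_{i,k}\rangle_{\cH}$ since $u_{i,k}\in\ran(P^{(i)}_{t_0})$; substituting \eqref{T2},
\[
\ddot W_{t_0} = (TP_{t_0})^* J\ddot Q_{t_0} TP_{t_0} - 2\big(\tr (\tr S_{t_0})^*\dot Q_{t_0}J\tr P_{t_0}\big)^*\dot Q_{t_0}J\tr P_{t_0},
\]
and pairing against $u_{i,k}$ gives the first term $\omega(\ddot Q_{t_0}\tr u_{i,k},\tr u_{i,k})$ as before and the second term $-2\langle \tr(\tr S_{t_0})^*\dot Q_{t_0}J\tr P_{t_0}u_{i,k},\, \dot Q_{t_0}J\tr P_{t_0}u_{i,k}\rangle_{\bndra}$, where I use that the adjoint of $\tr(\tr S_{t_0})^*\dot Q_{t_0}J\tr P_{t_0}$ moves $u_{i,k}$ across the pairing. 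This yields \eqref{razl}.

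The main obstacle I anticipate is the careful justification of the Kato reduction in the present setting: one must verify that the expansion of $W_t$ furnished by Theorem~\ref{theorem2.2} is genuinely of the form to which \cite[Theorem II.5.11]{K80} (or its finite-order, $C^2$ analogue) applies — in particular that the self-adjointness of $W_t$ on the fixed space $\ran(P_{t_0})$ is preserved (this follows since $U_t$ is unitary from $\ran(P_{t_0})$ onto $\ran(P_t)$ by \eqref{int27} and $\cA_t$ is self-adjoint, so $W_t^* = W_t$ on $\ran(P_{t_0})$), and that passing from an analytic hypothesis to a merely $C^2$/finite-order-Taylor hypothesis still delivers the two-stage eigenvalue splitting with the stated error $o(t-t_0)^2$. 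A clean way around any subtlety is to note that only the $2$-jet of $W_t$ at $t_0$ enters, and for a self-adjoint $2$-jet the second-order perturbation formulas for eigenvalues (with appropriate labeling through the two levels of degeneracy) are elementary linear algebra with a Taylor remainder; I would carry out this elementary argument directly rather than quoting the analytic Kato theorem verbatim. The remaining bookkeeping — matching each residue/pairing to the right-hand sides of \eqref{razlozhenie}–\eqref{razl} — is routine given \eqref{T1}, \eqref{T2}, and \eqref{5.3}.
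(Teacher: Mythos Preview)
Your proposal is correct and aligns with the paper's approach. The paper does not invoke \cite[Theorem II.5.11]{K80} directly but instead carries out precisely the ``elementary argument'' you identify as a fallback: it rewrites the Taylor expansion of Theorem~\ref{theorem2.2} as
\[
\frac{1}{t-t_0}\Big(\frac{1}{t-t_0}\big(P^{(i)}_{t_0}W_tP^{(i)}_{t_0}-\lambda\big)-\mu_i\Big)\underset{t\to t_0}{=}\tfrac12 P^{(i)}_{t_0}\ddot W_{t_0}P^{(i)}_{t_0}+o(1),
\]
using $P^{(i)}_{t_0}W_{t_0}P^{(i)}_{t_0}=\lambda I$ and $P^{(i)}_{t_0}\dot W_{t_0}P^{(i)}_{t_0}=\mu_i I$, and then appeals to continuity of the spectrum for finite-dimensional self-adjoint operators to match eigenvalues of the left-hand side with $\tfrac12\nu_{i,k}$. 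So your anticipated obstacle (analytic versus $C^2$) is real, and the paper's route is exactly your suggested workaround rather than the verbatim Kato citation. The computation of $\mu_i$ and $\nu_{i,k}$ via \eqref{T1}, \eqref{T2}, \eqref{5.3} is identical to yours; your caution about the argument order in $\omega$ is well placed, and your conclusion $\langle J\dot Q_{t_0}\tr u,\tr u\rangle=\omega(\dot Q_{t_0}\tr u,\tr u)$ is the one the paper uses.
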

\begin{proof}
 Taylor's expansion \eqref{1.42n} yields
\begin{align}
\frac{1}{(t-t_0)}\Big(\frac{1}{(t-t_0)}\big(W_t-W_{t_0}\big)-\dot W_{t_0}\Big)\underset{t\rightarrow t_0}{=}\frac12\ddot W_{t_0}+o(1).
\end{align}
Multiplying this by $P^{(i)}_{t_0}$ from both sides and using the identities 
\begin{equation}
P^{(i)}_{t_0}W_{t_0}P^{(i)}_{t_0}=\lambda I_{\ran P_{t_0}}, \quad P^{(i)}_{t_0}\dot W_{t_0}P^{(i)}_{t_0}=\mu_i I_{\ran P^{(i)}_{t_0}},
\end{equation}
we obtain 
\begin{align}
\frac{1}{(t-t_0)}\Big(\frac{1}{(t-t_0)}\big(P^{(i)}_{t_0}W_tP^{(i)}_{t_0}-\lambda\big)-\mu_i \Big)\underset{t\rightarrow t_0}{=}\frac12P^{(i)}_{t_0}\ddot W_{t_0}P^{(i)}_{t_0}+o(1).\lb{oplhs}
\end{align}
By continuity of the spectrum, see, e.g., \cite[Theorem VIII.24]{RS78}, every point in the set
\[\spec\left(\frac 12 P^{(i)}_{t_0}\ddot W_{t_0}P^{(i)}_{t_0}\right)=\left\{\frac 12\nu_{i,1},..., \frac 12\nu_{i,m'_i}\right\}\] 
can be approximated by an eigenvalue of the operator in the left-hand side of \eqref{oplhs} for $t$ near $t_0$. Such an eigenvalue is of the form
\begin{equation}
\frac{1}{(t-t_0)}\Big(\frac{1}{(t-t_0)}\big(\lambda_{i,k}(t)-\lambda\big)-\mu_i \Big), \text{ for some choice of $\lambda_{i,k}(t)\in\spec(\cA_tP_t|_{\ran(P_t)})=\spec(W_t)$.}
\end{equation}
That is, we have
\begin{align}
\frac{1}{(t-t_0)}\Big(\frac{1}{(t-t_0)}\big(\lambda_{i,k}(t)-\lambda\big)-\mu_i \Big)\underset{t\rightarrow t_0}{=}\frac12 \nu_{i,k}+o(1),
\end{align}
hence, \eqref{razlozhenie} holds as asserted. 

To prove \eqref{razl}, let $u_{i,k}\in \ran(P^{(i)}_{t_0})$ be the normalized eigenvectors of $\ddot W_{t_0}$ such that $\ddot W_{t_0} u_{i,k}=\nu_{i,k}u_{i,k}$. Since  $u_{i,k}\in \ran(P^{(i)}_{t_0})$ we have $\dot W_{t_0}u_{i,k}=\mu_iu_{i,k}$. Using these identities and \eqref{T1}, \eqref{T2} we arrive at

\begin{align}
\begin{split}\lb{munu}
&\mu_i=\langle \dot W_{t_0}u_{i,k},u_{i,k}\rangle_{\cH}=\omega(\dot Q_{t_0}\tr u_{i,k}, \tr u_{i,k}),\\
&\nu_{i,k}=\langle \ddot W_{t_0}u_{i,k},u_{i,k}\rangle_{\cH} =\omega(\ddot Q_{t_0}\tr u_{i,k}, \tr u_{i,k})-2 \langle\tr (\tr S_{t_0})^* \dot Q_{t_0}J\tr u_{i,k}, \dot Q_{t_0}J\tr P_{t_0} u_{i,k}\rangle_{\cH}.
\end{split}
\end{align}
Finally, combining \eqref{munu} and \eqref{razlozhenie} we obtain \eqref{razl}
\end{proof}

In applications, e.g. such as quantum graphs, spectral theory of differential operators, one often describes a Lagrangian plane  $\cG\in \Lambda(\bndra)$ by means of two bounded operators  $X,Y\in\cB(\mathfrak H)$, satisfying $XY^*=YX^*$, $0\not\in \spec(XX^*+YY^*)$, via  
\[\cG=\{(u,v)\in\bndra: Xu+Yv=0\}=\ker (Z),\, \text{ where $Z=[ X, Y]\in\cB(\bndra,\mathfrak{H})$}.\] 
It is therefore natural to derive the second order Taylor expansion in this setting, which is done in the following theorem. 
\begin{theorem}\lb{prop2.9} Under hypotheses of Theorem \ref{EVExp}, suppose that the Lagrangian planes $\ran(Q_t)=\tr(\dom(\cA_t))$ are given by $\ran(Q_t)=\ker Z_t$ where  $Z_t:=[X_t, Y_t]$ and $X_t, Y_t\in\cB(\mathfrak H)$, $X_tY_t^*=Y_tX_t^*$, $0\not\in\spec(X_tX^*_t+Y_tY^*_t)$, satisfy 
	\begin{align}
	&Z_t\underset{t\rightarrow t_0}{=}Z_{t_0}+\dot Z_{t_0}(t-t_0)+\frac12\ddot Z_{t_0}(t-t_0)^2+o(t-t_0)^2\text{\ in\ }\cB(\bndra).
	\end{align}
Then the eigenvalue curves $\lambda_{i, k}(t)$, defined as in Theorem \ref{EVExp}, bifurcating from the isolated eigenvalue $\lambda\in\spec (\cA_{t_0})$ admit the second order Taylor expansion
\begin{align}
\begin{split}\lb{XYsecodr}
\lambda_{i,k}(t)\underset{t\rightarrow t_0}{=}&\lambda+ \langle Z_{t_0} J\dot Z^*_{t_0}\phi_{i,k}, \phi_{i,k} \rangle_{\mathfrak{H}}(t-t_0)\\
&\quad+\frac{1}{2} \left(\langle Z_{t_0} J\ddot Z^*_{t_0}\phi_{i,k}, \phi_{i,k} \rangle_{\mathfrak{H}}-2\langle \tr (\tr S)^*\dot Z_{t_0}^*\phi_{i,k}, \dot Z_{t_0}^* \phi_{i,k} \rangle_{\bndra}\right)(t-t_0)^2\\
&\quad\qquad+o(t-t_0)^2,
\end{split}
\end{align}  
where $S_{t_0}$ is the reduced resolvent of $\cA_{t_0}$ at $\lambda$, and we introduce notation 
\[\phi_{i,k}= (Z_{t_0}Z_{t_0}^*)^{-1}Z_{t_0}J\tr u_{i,k}\]
for  $u_{i,k}$ given in Theorem \ref{EVExp}. 
\end{theorem}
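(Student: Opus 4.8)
The plan is to feed the explicit form of the orthogonal projection onto $\ker Z_t$ into the eigenvalue expansion \eqref{razl} of Theorem \ref{EVExp}. Since $Z_tZ_t^*=X_tX_t^*+Y_tY_t^*$ is boundedly invertible, the orthogonal projection in $\bndra$ onto $\ker Z_t=\ran(Q_t)$ is $Q_t=I-Z_t^*(Z_tZ_t^*)^{-1}Z_t$, so $Q_t$ inherits the regularity of $Z_t$ and $Z_tQ_t=0$. The Lagrangian condition $X_tY_t^*=Y_tX_t^*$ is exactly $Z_tJZ_t^*=0$; differentiating it once gives $\dot Z_{t_0}JZ_{t_0}^*=-Z_{t_0}J\dot Z_{t_0}^*$ (in particular $Z_{t_0}J\dot Z_{t_0}^*$ is self-adjoint) and twice gives $\ddot Z_{t_0}JZ_{t_0}^*+2\dot Z_{t_0}J\dot Z_{t_0}^*+Z_{t_0}J\ddot Z_{t_0}^*=0$. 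Because $\ran(Q_{t_0})=\ker Z_{t_0}$ is Lagrangian, $J$ interchanges it with $(\ker Z_{t_0})^\perp=\ran(Z_{t_0}^*)$; hence $JQ_{t_0}+Q_{t_0}J=J$, so all $t$-derivatives of $Q_t$ anticommute with $J$, and for the vectors $u_{i,k}$ of Theorem \ref{EVExp}, whose traces lie in $\ker Z_{t_0}$, one has $J\tr u_{i,k}=Z_{t_0}^*\phi_{i,k}$ with $\phi_{i,k}=(Z_{t_0}Z_{t_0}^*)^{-1}Z_{t_0}J\tr u_{i,k}$, equivalently $\tr u_{i,k}=-JZ_{t_0}^*\phi_{i,k}$.

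Next I would compute the first-order data. Differentiating $Z_tQ_t=0$ at $t_0$ and applying to $\tr u_{i,k}$ gives $Z_{t_0}\dot Q_{t_0}\tr u_{i,k}=-\dot Z_{t_0}\tr u_{i,k}$; since $\dot Q_{t_0}\tr u_{i,k}\in(\ker Z_{t_0})^\perp=\ran(Z_{t_0}^*)$ this forces $\dot Q_{t_0}\tr u_{i,k}=-Z_{t_0}^*(Z_{t_0}Z_{t_0}^*)^{-1}\dot Z_{t_0}\tr u_{i,k}$, and, using the $J$-anticommutation together with $\dot Z_{t_0}JZ_{t_0}^*=-Z_{t_0}J\dot Z_{t_0}^*$, that $\dot Q_{t_0}J\tr u_{i,k}=-Q_{t_0}\dot Z_{t_0}^*\phi_{i,k}$. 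Substituting into the linear term of \eqref{razl} and simplifying with the definition of $\phi_{i,k}$ and $\tr u_{i,k}=-JZ_{t_0}^*\phi_{i,k}$, the coefficient $\omega(\dot Q_{t_0}\tr u_{i,k},\tr u_{i,k})$ collapses to $\langle Z_{t_0}J\dot Z_{t_0}^*\phi_{i,k},\phi_{i,k}\rangle_{\mathfrak H}$, which is real because $Z_{t_0}J\dot Z_{t_0}^*$ is self-adjoint; this is precisely the first-order term of \eqref{XYsecodr}.

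For the quadratic term I would differentiate $Z_tQ_t=0$ twice, obtaining $Z_{t_0}\ddot Q_{t_0}\tr u_{i,k}$ in terms of $\ddot Z_{t_0},\dot Z_{t_0},Z_{t_0}$, and use the off-diagonal structure of $\ddot Q_{t_0}$ ($Q_{t_0}\ddot Q_{t_0}Q_{t_0}=-2Q_{t_0}\dot Q_{t_0}^2Q_{t_0}$) to split $\ddot Q_{t_0}\tr u_{i,k}$ into a $\ker Z_{t_0}$-component and an $(\ker Z_{t_0})^\perp$-component. In the pairing $\omega(\ddot Q_{t_0}\tr u_{i,k},\tr u_{i,k})$ the $\ker Z_{t_0}$-component drops out (after applying $J$ it lands in $(\ker Z_{t_0})^\perp$, orthogonal to $\tr u_{i,k}$), and the surviving piece, after the second-order Lagrangian identity, produces $\langle Z_{t_0}J\ddot Z_{t_0}^*\phi_{i,k},\phi_{i,k}\rangle_{\mathfrak H}$ plus a correction built from $Z_{t_0},\dot Z_{t_0}$ and the projection $Q_{t_0}$. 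For the reduced-resolvent term, since $\ran(S_{t_0})\subseteq\dom(\cA_{t_0})$ one has $\ran(\tr S_{t_0})\subseteq\tr(\dom(\cA_{t_0}))=\ker Z_{t_0}$, hence $(\tr S_{t_0})^*(I-Q_{t_0})=0$; combining this with $\dot Q_{t_0}J\tr u_{i,k}=-Q_{t_0}\dot Z_{t_0}^*\phi_{i,k}$ lets me rewrite $\langle\tr(\tr S_{t_0})^*\dot Q_{t_0}J\tr u_{i,k},\dot Q_{t_0}J\tr u_{i,k}\rangle_{\bndra}$ in terms of $\langle\tr(\tr S_{t_0})^*\dot Z_{t_0}^*\phi_{i,k},\dot Z_{t_0}^*\phi_{i,k}\rangle_{\bndra}$, and then reconcile the leftover pieces against the correction from the previous step using once more the Lagrangian identities and $JQ_{t_0}+Q_{t_0}J=J$. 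Plugging the results into \eqref{razl} gives \eqref{XYsecodr}, and $\phi_{i,k}$ with $u_{i,k}$ are inherited from Theorem \ref{EVExp}.

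\emph{The hard part} is this last reconciliation. Differentiating $(Z_tZ_t^*)^{-1}$ twice generates several terms which, a priori, do not visibly assemble into the compact combination $\langle Z_{t_0}J\ddot Z_{t_0}^*\phi_{i,k},\phi_{i,k}\rangle-2\langle\tr(\tr S_{t_0})^*\dot Z_{t_0}^*\phi_{i,k},\dot Z_{t_0}^*\phi_{i,k}\rangle$; matching them requires using in a coordinated way the second-order Lagrangian identity, the inclusion $\ran(\tr S_{t_0})\subseteq\ker Z_{t_0}$ together with the dual statement that makes $\tr(\tr S_{t_0})^*$ act on $\dot Z_{t_0}^*\phi_{i,k}$ at all, and the symplectic interchange property of $J$ — in effect, the abstract version of the reduced Dirichlet-to-Neumann relation emphasized in the introduction. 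The purely algebraic first-order part and the range/boundedness statements from Remark \ref{Remark2.4} and Theorem \ref{prop1.8new} are, by contrast, routine.
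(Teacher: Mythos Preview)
Your strategy is the same as the paper's: substitute into \eqref{razl} and convert the $Q$-expressions into $Z$-expressions via the Lagrangian identities. Your first-order argument is correct and essentially identical to the paper's derivation of \eqref{omdq}; the only cosmetic difference is that you parametrize $Q_{t_0}=I-Z_{t_0}^*(Z_{t_0}Z_{t_0}^*)^{-1}Z_{t_0}$ and differentiate $Z_tQ_t=0$, whereas the paper factors $Q=\alpha\beta\gamma$ with $\alpha=-JZ^*$, $\beta=(ZZ^*)^{-1}$, $\gamma=ZJ$ and exploits $\gamma J\tr u=0$ to kill undifferentiated $\gamma$-terms directly.

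Where your sketch falls short is exactly the step you flag as hard. The paper isolates the second-order content into three identities,
\[
\omega(\ddot Q\tr u,\tr u)=\langle ZJ\ddot Z^*\phi,\phi\rangle-2\,\omega(Q\dot Z^*\phi,\dot Z^*\phi),
\]
\[
\omega(Q\dot Z^*\phi,\dot Z^*\phi)=\langle\tr(\tr S)^*\dot Z^*\phi,(1-Q)\dot Z^*\phi\rangle,
\qquad
\langle\tr(\tr S)^*\dot QJ\tr u,\dot QJ\tr u\rangle=\langle\tr(\tr S)^*\dot Z^*\phi,Q\dot Z^*\phi\rangle,
\]
and it is the \emph{middle} one that is not purely algebraic: its left side involves only $Z,\dot Z,\phi$, while its right side involves $S$. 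The bridge is the identity
\[
\dot Q\tr u=(1-Q)\,\tr(\tr S)^*\dot QJ\tr u,
\]
which the paper obtains by differentiating $Q\tr P=\tr P$ and then invoking the explicit formula \eqref{pone} for $\dot P_{t_0}$ (together with $S_{t_0}u=0$). This is the concrete ``reduced Dirichlet-to-Neumann relation'' you allude to. The tools you list --- the Lagrangian identities $Z_tJZ_t^*=0$ and their derivatives, $JQ+QJ=J$, and $\ran(\tr S)\subset\ker Z$ --- suffice for the first and third identities above but do not by themselves produce the middle one; you need to bring in $\dot P$ from Proposition \ref{Proposition1.5}. Once you add that ingredient, your reconciliation goes through exactly as in the paper.
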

\begin{proof}
For brevity, we denote $u=u_{i,k}$, $\phi:=\phi_{i,k}$ and suppress dependence of $P_{t_0}$, $Z_{t_0}$, $Q_{t_0}$ and their derivatives on $t_0$, e.g., $\ddot Z_{t_0}$ will be denoted by $\ddot Z$. Let us record some relevant identities. First, the orthogonal projection $Q$ onto the kernel of $Z$, i.e. onto the Lagrangian plane $\tr(\dom(\cA))$, can be written as
\begin{equation}\lb{projZ}
Q=-JZ^*(ZZ^*)^{-1}ZJ,
\end{equation}
thus we have $\tr u=Q\tr u$, and, by $\phi= (ZZ^*)^{-1}ZJ\tr u$, one also has
\begin{align}
\begin{split}
&\tr u=-JZ^*\phi. \lb{tujzs}
\end{split}
\end{align}
In addition, the identity $XY^*=Y X^*$ yields $ZJZ^*=0,$ therefore, by differentiating we arrive at
\begin{align}
&\dot Z J Z^*=- Z J\dot Z^*, \lb{dz}\\
&\ddot Z J Z^*=-2\dot Z J\dot Z^*- Z J \ddot Z^*.\lb{ddz}
\end{align}
Furthermore, since $\ran(Q)=\ker(Z)$ we have $ZQ=0$, hence,
\begin{align}
&\dot ZQ+Z\dot Q=0\lb{dzq},\\
&Q\dot Z^*+\dot QZ^*=0\lb{qdz}.
\end{align}
Also, since $QJ+JQ=J$ we get
\begin{equation}\lb{dqj}
\dot QJ=-J\dot Q,\ \ddot QJ=-J\ddot Q. 
\end{equation}
Let us rewrite the projection $Q$ onto the Lagrangian plane as $Q=\alpha\beta\gamma$ with
\begin{align}
&\alpha:=-JZ^*\in\cB(\mathfrak H, \bndra),\ \beta:= (ZZ^*)^{-1}\in\cB(\mathfrak H),\ \gamma:=ZJ\in\cB(\bndra, \mathfrak{H}).
\end{align}
We proceed by introducing three more auxiliary identities \eqref{dqjtu}, \eqref{dqtu}, \eqref{ident3} below. One has
\begin{equation}\lb{dqjtu}
\dot QJ\tr u=-Q\dot Z^* \phi.
\end{equation}
To prove this, we note that $\gamma J\tr u=0$, therefore
\begin{align}
\dot QJ\tr u&=\alpha \beta \dot \gamma J\tr u=-\alpha \beta\dot Z\tr u\underset{\eqref{tujzs}}{=}\alpha \beta\dot ZJZ^*\phi\underset{\eqref{dz}}{=}-\alpha \beta ZJ\dot Z^*\phi=-Q\dot Z^*\phi.
\end{align}
We also have
\begin{equation}\lb{dqtu}
\dot Q \tr u=(1-Q)\tr \dot P u.
\end{equation}
One arrives at this identity by differentiating both sides of the equation $QTP=TP$ and noting that $Pu=u$. 
Next, we record the identity 
\begin{equation}\lb{ident3}
\dot Q \tr u=(1-Q)\tr (\tr S)^*\dot Q J\tr u,
\end{equation}
which follows from \eqref{dqtu}, explicit formula for $\dot P$ in \eqref{pone}, and the fact that the second term in the RHS of \eqref{pone} vanishes since $S_{t_0}u=0$. 

We are now ready to prove \eqref{XYsecodr}. To that end we employ \eqref{razl} together with the auxiliary identities \eqref{tujzs}-\eqref{ident3}. It suffices to show that 
\begin{align}
&\omega(\dot Q\tr u, \tr u)=\langle ZJ\dot Z^*\phi,\phi\rangle_{\mathfrak{H}},\lb{omdq}\\
&\omega(\ddot Q\tr u, \tr u)-2\langle \tr (\tr S)^*\dot QJ\tr u, \dot QJ\tr u\rangle_{\bndra}=\\
&\hspace{2cm}=\langle ZJ\ddot Z^*\phi,\phi\rangle_{\mathfrak{H}}-2\langle \tr (\tr S)^*\dot Z^*\phi, \dot Z^*\phi\rangle_{\bndra}\lb{omddq}.
\end{align}

Let us first derive \eqref{omdq}. One has
\begin{align}
\omega(\dot Q\tr u, \tr u)&=\langle J\dot Q \tr u, \tr u\rangle_{\bndra}\underset{\eqref{dqj}}{=}-\langle \dot Q J\tr u, \tr u\rangle_{\bndra}\underset{\eqref{dqjtu}}{=}\langle Q\dot Z^* \phi, \tr u\rangle_{\bndra}\\&=\langle \dot Z^* \phi, Q\tr u\rangle_{\bndra}
=\langle \dot Z^* \phi, \tr u\rangle_{\bndra}\underset{\eqref{tujzs}}{=}\langle \dot Z^* \phi, -JZ^*\phi\rangle_{\bndra}=\langle Z J\dot Z^* \phi,\phi\rangle_{\mathfrak{H}}.
\end{align}
The derivation of \eqref{omddq} is more subtle.  Our immediate objective is to derive the following,
\begin{align}
&\omega(\ddot Q\tr u, \tr u)=\langle ZJ\ddot Z^*\phi, \phi\rangle_{\mathfrak H}-2\omega(Q\dot Z^*\phi, \dot Z^*\phi),\lb{ident1}\\
&\omega(Q\dot Z^*\phi, \dot Z^*\phi)=\langle \tr (\tr S)^*\dot Z^*\phi, (1-Q)\dot Z^*\phi\rangle_{\bndra}, \lb{ident2}\\
&\langle \tr (\tr S)^*\dot QJ\tr  u, \dot QJ\tr  u\rangle_{\bndra}=\langle \tr  (\tr  S)^*\dot Z^*\phi, Q\dot Z^*\phi\rangle_{\bndra},\lb{ident3p}.
\end{align}
since \eqref{omddq} readily follows from \eqref{ident1}--\eqref{ident3p}.
To to begin the proof of \eqref{ident1} we differentiate $Q=\alpha\beta\gamma$ twice. Since $\gamma J\tr  u=0$ and infer
\begin{equation}
\ddot Q J \tr  u=\alpha\beta\ddot \gamma J\tr  u+2(\dot \alpha\beta+\alpha\dot\beta)\dot \gamma J\tr u.
\end{equation}
Plugging this into the left-hand side of \eqref{ident1} we arrive at
\begin{align}\lb{vsp1}
&\omega(\ddot Q\tr u, \tr u)=-\langle \ddot QJ\tr u,\tr u\rangle_{\bndra}\\
&\quad=-\langle \alpha\beta\ddot \gamma  J\tr u,\tr u\rangle_{\bndra}-2\langle(\dot \alpha\beta+\alpha\dot\beta)\dot \gamma J\tr u, \tr u\rangle_{\bndra}.
\end{align}
Let us re-write both terms in the right-hand side of \eqref{vsp1}. The first term is
\begin{align}
\begin{split}\lb{abgddot}
&-\langle \alpha\beta\ddot \gamma  J\tr u,\tr u\rangle_{\bndra}\underset{\eqref{tujzs}}{=}-\langle \alpha\beta \ddot Z J Z^*\phi ,\tr u\rangle_{\bndra}\\
&\underset{\eqref{ddz}}{=}2\langle \alpha\beta \dot Z J \dot Z^*\phi ,\tr u\rangle_{\bndra}+\langle \alpha\beta  Z J \ddot Z^*\phi ,\tr u\rangle_{\bndra}\\
&=2\langle (ZZ^*)^{-1} \dot Z J \dot Z^*\phi , ZZ^*\phi\rangle_{\mathfrak{H}}+\langle \alpha\beta \gamma \ddot Z^*\phi ,\tr u\rangle_{\bndra}\\
&=2\langle   J \dot Z^*\phi , \dot Z^*\phi\rangle_{\bndra}+\langle Q  \ddot Z^*\phi ,Tu\rangle_{\bndra}\underset{\eqref{tujzs}}{=}\langle    ZJ\ddot Z^*\phi ,\phi\rangle_{\mathfrak H},
\end{split}
\end{align}
where we used $\langle   J \dot Z^*\phi , \dot Z \phi\rangle_{\bndra}=0$. In order to re-write the second term in the right-hand side of \eqref{vsp1} we first notice that $\beta=(ZZ^*)^{-1}$ yields
\[\dot \beta =-\beta(\dot Z  Z^*+Z\dot Z^*)\beta,\]
thus 
\begin{align}
\begin{split}\lb{abgdots}
&\langle(\dot \alpha\beta+\alpha\dot\beta)\dot \gamma J\tr u, \tr u\rangle_{\bndra}\\
&\quad=\langle\dot \alpha\beta\dot \gamma J\tr u, \tr u\rangle_{\bndra}-\langle \alpha\beta\dot Z Z^*\beta \dot \gamma J\tr u, \tr u\rangle_{\bndra}-\langle \alpha\beta Z \dot Z^*\beta\dot \gamma J\tr u, \tr u\rangle_{\bndra}\\
&\quad=\langle\dot \alpha\beta\dot \gamma J\tr u, \tr u\rangle_{\bndra}-\langle \alpha\beta\dot Z Z^*\beta \dot \gamma J\tr u, \tr u\rangle_{\bndra}+\langle \alpha\beta ZJ J \dot Z^*\beta\dot \gamma J\tr u, \tr u\rangle_{\bndra}\\
&\quad=\langle\dot \alpha\beta\dot \gamma JTu, Tu\rangle_{\bndra}-\langle \alpha\beta\dot Z Z^*\beta \dot \gamma J\tr u, \tr u\rangle_{\bndra}-\langle \alpha\beta\gamma \dot\alpha\beta\dot \gamma J\tr u, \tr u\rangle_{\bndra}\\
&\quad=\langle\dot \alpha\beta\dot \gamma J\tr u, \tr u\rangle_{\bndra}-\langle \alpha\beta\dot Z Z^*\beta \dot \gamma J\tr u, \tr u\rangle_{\bndra}-\langle \dot\alpha\beta\dot \gamma J\tr u, Q\tr u\rangle_{\bndra}\\
&=\quad-\langle \alpha\beta\dot Z Z^*\beta \dot \gamma J\tr u, \tr u\rangle_{\bndra},
\end{split}
\end{align}
where we used $Q=\alpha\beta\gamma$, $Q\tr u=\tr u$. Now, let us further simplify the last expression. One has
\begin{align}
\begin{split}\lb{abzdots}
-\langle \alpha\beta&\dot Z Z^*\beta \dot \gamma J\tr u, \tr u\rangle_{\bndra}=-\langle \dot Z Z^*(Z Z^*)^{-1} \dot Z J J\tr u, (ZZ^*)^{-1}ZJ\tr u\rangle_{\mathfrak H}\\
&\quad\underset{\eqref{tujzs}}{=}-\langle \dot Z Z^*(Z Z^*)^{-1} \dot Z J Z^*\phi, \phi \rangle_{\mathfrak H}\underset{\eqref{dz}}{=}\langle \dot Z Z^*(Z Z^*)^{-1}  Z J \dot Z^*\phi, \phi \rangle_{\mathfrak H}\\
&\quad=\langle J(-JZ)^*(Z Z^*)^{-1}  Z J \dot Z^*\phi, \dot Z^* \phi \rangle_{\bndra}=\omega(Q\dot Z^*\phi,\dot Z^*\phi).
\end{split}
\end{align}
Combining \eqref{vsp1}, \eqref{abgdots} and \eqref{abzdots} one infers \eqref{ident1} as asserted. 

Next, we prove \eqref{ident2}. One has
\begin{align}
&\omega(Q\dot Z^*\phi,\dot Z^*\phi)=\langle JQ\dot Z^*\phi, \dot Z^*\phi \rangle_{\bndra}\underset{\eqref{qdz}}{=}-\langle J\dot Q Z^*\phi, \dot Z^*\phi \rangle_{\bndra}\underset{\eqref{dqj}}{=}\langle \dot QJ Z^*\phi, \dot Z^*\phi \rangle_{\bndra}\\
&\quad=-\langle \dot Q\tr u, \dot Z^*\phi \rangle_{\bndra}\underset{\eqref{ident3}}{=}-\langle (1-Q)\tr (\tr S)^*\dot Q J\tr u, \dot Z^*\phi \rangle_{\bndra}\\
&\quad\underset{\eqref{dqjtu}}{=}\langle (1-Q)\tr (\tr S)^*Q\dot Z^*\phi, \dot Z^*\phi \rangle_{\bndra}\\
&\quad=\langle \tr (Q\tr S)^*\dot Z^*\phi,(1-Q) \dot Z^*\phi \rangle_{\bndra}=\langle \tr (\tr S)^*\dot Z^*\phi,(1-Q) \dot Z^*\phi \rangle_{\bndra},
\end{align}
where we used $Q\tr S=\tr S$ which holds since $\ran(S)\subset \ran(Q)$. 

To complete the proof of the theorem we derive \eqref{ident3p}. To that end, we have 
\begin{align}
&\langle \tr (\tr S)^*\dot QJ\tr u, \dot QJ\tr u\rangle_{\bndra}\underset{\eqref{dqjtu}}{=}\langle \tr (\tr S)^*Q\dot Z^*\phi, Q\dot Z^*\phi\rangle_{\bndra}\\
&\quad= \langle \tr (Q\tr S)^*\dot Z^*\phi, Q\dot Z^*\phi\rangle_{\bndra}=\langle \tr (\tr S)^*\dot Z^*\phi, Q\dot Z^*\phi\rangle_{\bndra},
\end{align}
where, as before, we used $Q\tr S=\tr S$. 
\end{proof}

\begin{corollary}\lb{cor211} Under the hypotheses and general setting of Theorem \ref{prop2.9} we assume that $X_t=\Theta_t$, $Y_t=-I$, that is, we assume that $\cA_t$ is a Robin-type self-adjoint extension of $A$ with $\dom(\cA_t)=\{u\in\cH_+: \Gamma_1u=\Theta_t\Gamma_0u \}$, and that $\Theta_t=\Theta_t^*$. Then one has
	\begin{align}
	\begin{split}\lb{XYsecodrnew}
	\lambda_{i,k}(t)\underset{t\rightarrow t_0}{=}&\lambda+\langle\dot \Theta\Gamma_0u_0, \Gamma_0 u_0 \rangle_{{\mathfrak{H}}}(t-t_0)\\
	&\quad+\frac{1}{2} \left(\langle\big(\ddot \Theta_{t_0}-2\dot \Theta_{t_0} \Gamma_0 (\Gamma_0S_{t_0})^*\dot \Theta_{t_0}\big)\Gamma_0 u_{i,k},\Gamma_0 u_{i,k} \rangle_{\mathfrak{H}}\right)(t-t_0)^2\\
	&\quad+o(t-t_0)^2.
	\end{split}
	\end{align}  
\end{corollary}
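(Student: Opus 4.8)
The plan is to specialize Theorem \ref{prop2.9} to the Robin-type case $X_t = \Theta_t$, $Y_t = -I$, so that $Z_t = [\Theta_t, -I]$. First I would compute the relevant building blocks in this concrete setting. The self-adjointness condition $X_tY_t^* = Y_tX_t^*$ becomes $-\Theta_t = -\Theta_t^*$, i.e. $\Theta_t = \Theta_t^*$, which is exactly the hypothesis. The operator $Z_tZ_t^* = \Theta_t\Theta_t^* + I = \Theta_t^2 + I$ is invertible, and the trace decomposition is $\tr u = [\Gamma_0 u, \Gamma_1 u]^\top$ with the Robin condition forcing $\Gamma_1 u_{i,k} = \Theta_{t_0}\Gamma_0 u_{i,k}$ since $u_{i,k}\in\ran(P_{t_0}) \subset \dom(\cA_{t_0})$, hence $\tr u_{i,k}\in\ker Z_{t_0}$.

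Next I would identify $\phi_{i,k} = (Z_{t_0}Z_{t_0}^*)^{-1}Z_{t_0}J\tr u_{i,k}$ explicitly. Using $J\tr u_{i,k} = [\Gamma_1 u_{i,k}, -\Gamma_0 u_{i,k}]^\top = [\Theta_{t_0}\Gamma_0 u_{i,k}, -\Gamma_0 u_{i,k}]^\top$ and $Z_{t_0} = [\Theta_{t_0}, -I]$, one gets $Z_{t_0}J\tr u_{i,k} = \Theta_{t_0}^2\Gamma_0 u_{i,k} + \Gamma_0 u_{i,k} = (\Theta_{t_0}^2+I)\Gamma_0 u_{i,k}$, so $\phi_{i,k} = \Gamma_0 u_{i,k}$. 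This is the key simplification. Then $\dot Z_{t_0}^* = [\dot\Theta_{t_0}, 0]^\top$ (the second block is constant), so $\dot Z_{t_0}^*\phi_{i,k} = [\dot\Theta_{t_0}\Gamma_0 u_{i,k}, 0]^\top$, and similarly $\ddot Z_{t_0}^*\phi_{i,k} = [\ddot\Theta_{t_0}\Gamma_0 u_{i,k}, 0]^\top$. For the first-order term I would compute $\langle Z_{t_0}J\dot Z_{t_0}^*\phi_{i,k}, \phi_{i,k}\rangle_{\mathfrak H}$: since $J\dot Z_{t_0}^*\phi_{i,k} = [0, -\dot\Theta_{t_0}\Gamma_0 u_{i,k}]^\top$ and $Z_{t_0}$ applied to this gives $-(-I)\dot\Theta_{t_0}\Gamma_0 u_{i,k} = \dot\Theta_{t_0}\Gamma_0 u_{i,k}$, the pairing is $\langle\dot\Theta_{t_0}\Gamma_0 u_{i,k}, \Gamma_0 u_{i,k}\rangle_{\mathfrak H}$, matching \eqref{XYsecodrnew}. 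The analogous computation gives the $\langle Z_{t_0}J\ddot Z_{t_0}^*\phi_{i,k},\phi_{i,k}\rangle_{\mathfrak H} = \langle\ddot\Theta_{t_0}\Gamma_0 u_{i,k},\Gamma_0 u_{i,k}\rangle_{\mathfrak H}$ term.

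The remaining, slightly more delicate piece is the correction term $-2\langle \tr(\tr S_{t_0})^*\dot Z_{t_0}^*\phi_{i,k}, \dot Z_{t_0}^*\phi_{i,k}\rangle_{\bndra}$ from \eqref{XYsecodr}. Here I would unfold $\dot Z_{t_0}^*\phi_{i,k} = [\dot\Theta_{t_0}\Gamma_0 u_{i,k}, 0]^\top \in \bndra$, and recall that $\tr = [\Gamma_0, \Gamma_1]^\top$, so $(\tr S_{t_0})^* = [(\Gamma_0 S_{t_0})^*, (\Gamma_1 S_{t_0})^*]$ acting from $\bndra$ back to $\cH$. Applying this to $[\dot\Theta_{t_0}\Gamma_0 u_{i,k}, 0]^\top$ picks out only the first component, namely $(\Gamma_0 S_{t_0})^*\dot\Theta_{t_0}\Gamma_0 u_{i,k}\in\cH$; then applying $\tr$ and pairing against $\dot Z_{t_0}^*\phi_{i,k}$ in $\bndra$ again picks out only the first (boundary) component, which is $\Gamma_0(\Gamma_0 S_{t_0})^*\dot\Theta_{t_0}\Gamma_0 u_{i,k}$, paired in $\mathfrak H$ against $\dot\Theta_{t_0}\Gamma_0 u_{i,k}$. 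Using self-adjointness of $\dot\Theta_{t_0}$ this rearranges to $\langle\dot\Theta_{t_0}\Gamma_0(\Gamma_0 S_{t_0})^*\dot\Theta_{t_0}\Gamma_0 u_{i,k}, \Gamma_0 u_{i,k}\rangle_{\mathfrak H}$, which is precisely the $-2\dot\Theta_{t_0}\Gamma_0(\Gamma_0 S_{t_0})^*\dot\Theta_{t_0}$ contribution inside the Hessian in \eqref{XYsecodrnew}. Substituting all three computed quantities into \eqref{XYsecodr} and writing $u_0$ as shorthand for $u_{i,k}$ in the linear term completes the proof. The main obstacle I anticipate is bookkeeping the block structure of the trace operator $\tr$ and its adjoint carefully in the correction term, since $\tr$ appears twice and one must track which component survives at each stage; everything else is direct substitution.
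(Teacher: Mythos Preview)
Your proposal is correct and follows exactly the same approach as the paper, which simply states that \eqref{XYsecodrnew} follows directly from \eqref{XYsecodr} with $Z_t=[\Theta_t,-I]$; you have carried out that direct substitution in full detail, and every step checks out.
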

\begin{proof}Formula \eqref{XYsecodrnew} follows directly from \eqref{XYsecodr} with $Z_t=[\Theta_t, -I]$.
\end{proof}
\begin{example}[Matrix Robin Laplacian] Let $V\in L^{\infty}([a,b], \bbC^{n\times n})$ be a matrix valued bounded function with $V(x)=(V(x))^*$, $x\in[a,b]$, and let $A=-\frac{d^2}{dx^2}+V$ be the minimal operator with $\dom(A)=H^2_0([a,b]; \bbC^n)$. Given $u\in H^2([a,b], \bbC^n)$ we denote $\gaD u=[u(a), u(b)]^{\top}$, $\gaN u=[u(a), -u'(b)]^{\top}$ and view $\gaD, \gaN$ as bounded linear operators in $\cB(H^2([a,b],\bbC^{2n} );\bbC^{2n})$.  Let $\Theta\in C^2([0,1], \bbC^{2n\times 2n})$ be a matrix-valued function with $\Theta_t=\Theta_t^*$, $t\in[0,1]$, and define a one-parameter family  of Robin extensions of $A$ by the formula
	\begin{equation}
	\dom(\cA_t)=\{u\in H^2([a,b]; \bbC^n): \gaN u=\Theta_t \gaD u\}.
	\end{equation}
	Suppose that $\lambda\in\bbR$ is a simple eigenvalue of $\cA_{t_0}$ for some $t_0$ and suppose that $u_0\in\ker(\cA_{t_0}-\lambda)$ has unit norm. Then there is an eigenvalue curve bifurcating from $\lambda$ and obeying the following asymptotic expansion,
	\begin{align}\begin{split}\lb{1dRobin}
	\lambda(t)=
	\lambda&+\left\langle\dot \Theta_{t_0} \gaD u, \gaD u \right\rangle_{\C^{2n}}(t-t_0)\\
	&+\frac12\left\langle\big(\ddot \Theta_{t_0}-2\dot \Theta_{t_0} \gaD(\gaD S)^*\dot \Theta_{t_0}\big)  \gaD u,\gaD u\right\rangle_{\bbC^{2n}}(t-t_0)^2+o(t-t_0)^2, t\rightarrow t_0.
	\end{split}
	\end{align}
\end{example}

\begin{remark}\lb{rem2.14}Our main results, Theorem \ref{theorem2.2},\ref{EVExp} and Theorem \ref{prop2.9} can be extended to a  more general setting in which the self-adjoint extensions are subject to further additive perturbations  $\cA_t+V_t$. To elaborate on this case, we augment Hypothesis \ref{hyp1.3bis} by fixing a one-parameter family of bounded self-adjoint operators  $t\mapsto V_t\in\cB(\cH)$ with
	\begin{equation}
	V_t\underset{t\rightarrow t_0}{=}V_{t_0}+\dot V_{t_0}(t-t_0)+\frac12\ddot V_{t_0}(t-t_0)^2+o(t-t_0)^2\text{\ in\ }\cB(\cH).
	\end{equation}
 Let us introduce $H_t:=\cA_t+V_t$, denote its resolvent by $R_t(\zeta):=(H_t-\zeta)^{-1}\in\cB(\cH)$ for $\zeta\not\in \Sp(H_t)$, and its reduced resolvent at the eigenvalue $\lambda$ by $S_{t_0}$. Keeping general setting of Theorem \ref{EVExp} with $\cA_t$ replaced by $H_t$ we record the first and second order terms in the Taylor expansion of the eigenvalue curves. The first order term is given by
\begin{align}
\dot \lambda_{i,k}=\langle\dot V_{t_0}u_{i,k}, u_{i,k}\rangle_{\cH}+\omega(\dot Q_{t_0}\tr u_{i,k}, \tr u_{i,k}).
\end{align} 
The second order term is given by
\begin{align}
\ddot \lambda_{i,k}&=\frac 12\Big(\langle\ddot V_{t_0}u_{i,k}, u_{i,k}\rangle_{\cH}+\omega(\ddot Q_{t_0}\tr u_{i,k}, \tr u_{i,k})\Big)-\langle \tr (\tr S_{t_0})^*\dot Q_{t_0}J\tr P_{t_0}u_{i,k}, \dot Q_{t_0}J\tr P_{t_0}u_{i,k}\rangle_{\bndra}\\
&+2\langle \tr S \dot V_{t_0}Pu_{i,k}, \dot Q_{t_0}J\tr P_{t_0}u_{i,k}\rangle_{\bndra}-\langle \dot V_{t_0}S_{t_0}\dot V_{t_0}u_{i,k}, u_{i,k}\rangle_{\cH}. 
\end{align}
The derivation relies on Krein's formula from \cite{LSHadamard},
\begin{align}
R_{t}(\zeta)- R_{t_0}(\zeta)=R_t({\zeta})(V_{t_0}-V_t)R_{t_0}(\zeta)+(TR_t(\zeta))^*(Q_t-Q_{t_0})JQ_{t_0}TR_{t_0}(\zeta), 
\end{align}
and resolvent expansions arguments presented above. 
\end{remark}

\bibliography{mybib}

\begin{thebibliography}{10}

\bibitem{MR3412445}
{\sc C.~Bandle and A.~Wagner}, {\em Second domain variation for problems with
  {R}obin boundary conditions}, J. Optim. Theory Appl., 167 (2015),
  pp.~430--463.

\bibitem{Behrndt_2020}
{\sc J.~Behrndt, S.~Hassi, and H.~de~Snoo}, {\em Boundary Value Problems, Weyl
  Functions, and Differential Operators}, Springer International Publishing,
  2020.

\bibitem{BCCL}
{\sc G.~Berkolaiko, Y.~Canzani, G.~Cox, and J.~L. Marzuola}, {\em A local test
  for global extrema in the dispersion relation of a periodic graph}, arxiv:
  2004.12931.

\bibitem{MR3961373}
{\sc G.~Berkolaiko, G.~Cox, and J.~L. Marzuola}, {\em Nodal deficiency,
  spectral flow, and the {D}irichlet-to-{N}eumann map}, Lett. Math. Phys., 109
  (2019), pp.~1611--1623.

\bibitem{BCE}
{\sc G.~Berkolaiko and P.~Kuchment}, {\em Spectral shift via ``lateral"
  perturbation}, arxiv: 2011.11142.

\bibitem{BBZ18}
{\sc B.~Boo\ss~Bavnbek and C.~Zhu}, {\em The {M}aslov index in symplectic
  {B}anach spaces}, Mem. Amer. Math. Soc., 252 (2018), pp.~x+118.

\bibitem{MR2336430}
{\sc V.~I. Burenkov, P.~D. Lamberti, and M.~Lantsa~de Kristoforis}, {\em
  Spectral stability of nonnegative selfadjoint operators}, Sovrem. Mat.
  Fundam. Napravl., 15 (2006), pp.~76--111.

\bibitem{CJLS}
{\sc G.~Cox, C.~K. R.~T. Jones, Y.~Latushkin, and A.~Sukhtayev}, {\em The
  {M}orse and {M}aslov indices for multidimensional {S}chr\"{o}dinger operators
  with matrix-valued potentials}, Trans. Amer. Math. Soc., 368 (2016),
  pp.~8145--8207.

\bibitem{CJM1}
{\sc G.~Cox, C.~K. R.~T. Jones, and J.~L. Marzuola}, {\em A {M}orse index
  theorem for elliptic operators on bounded domains}, Comm. Partial
  Differential Equations, 40 (2015), pp.~1467--1497.

\bibitem{CJM2}
{\sc G.~Cox, C.~K. R.~T. Jones, and J.~L. Marzuola}, {\em Manifold
  decompositions and indices of {S}chr\"{o}dinger operators}, Indiana Univ.
  Math. J., 66 (2017), pp.~1573--1602.

\bibitem{DJ11}
{\sc J.~Deng and C.~Jones}, {\em Multi-dimensional {M}orse index theorems and a
  symplectic view of elliptic boundary value problems}, Trans. Amer. Math.
  Soc., 363 (2011), pp.~1487--1508.

\bibitem{Fri91}
{\sc L.~Friedlander}, {\em Some inequalities between {D}irichlet and {N}eumann
  eigenvalues}, Arch. Rational Mech. Anal., 116 (1991), pp.~153--160.

\bibitem{GG}
{\sc V.~I. Gorbachuk and M.~L. Gorbachuk}, {\em Boundary value problems for
  operator differential equations}, vol.~48 of Mathematics and its Applications
  (Soviet Series), Kluwer Academic Publishers Group, Dordrecht, 1991.
\newblock Translated and revised from the 1984 Russian original.

\bibitem{MR2720607}
{\sc P.~Grinfeld}, {\em Hadamard's formula inside and out}, J. Optim. Theory
  Appl., 146 (2010), pp.~654--690.

\bibitem{BHMPS}
{\sc B.~Helffer and M.~P. Sundqvist}, {\em Spectral flow for pair compatible
  equipartitions}, Communications in Partial Differential Equations, 0 (2021),
  pp.~1--28.

\bibitem{MR2251558}
{\sc A.~Henrot}, {\em Extremum problems for eigenvalues of elliptic operators},
  Frontiers in Mathematics, Birkh\"{a}user Verlag, Basel, 2006.

\bibitem{MR2160744}
{\sc D.~Henry}, {\em Perturbation of the boundary in boundary-value problems of
  partial differential equations}, vol.~318 of London Mathematical Society
  Lecture Note Series, Cambridge University Press, Cambridge, 2005.
\newblock With editorial assistance from Jack Hale and Ant\^{o}nio Luiz
  Pereira.

\bibitem{K80}
{\sc T.~Kato}, {\em Perturbation theory for linear operators}, Classics in
  Mathematics, Springer-Verlag, Berlin, 1995.
\newblock Reprint of the 1980 edition.

\bibitem{LSHadamard}
{\sc Y.~Latushkin and S.~Sukhtaiev}, {\em First-order asymptotic perturbation
  theory for extensions of symmetric operators}, arXiv:2012.00247.

\bibitem{LS17}
{\sc Y.~Latushkin and A.~Sukhtayev}, {\em Hadamard-type formulas via the
  {M}aslov form}, J. Evol. Equ., 17 (2017), pp.~443--472.

\bibitem{RS1}
{\sc M.~Reed and B.~Simon}, {\em Methods of modern mathematical physics. {I}.
  {F}unctional analysis}, Academic Press, New York-London, 1972.

\bibitem{RS78}
\leavevmode\vrule height 2pt depth -1.6pt width 23pt, {\em Methods of modern
  mathematical physics. {IV}. {A}nalysis of operators}, Academic Press, New
  York, 1978.

\bibitem{RoSa95}
{\sc J.~Robbin and D.~Salamon}, {\em The spectral flow and the {M}aslov index},
  Bull. London Math. Soc., 27 (1995), pp.~1--33.

\bibitem{Schm}
{\sc K.~Schm\"{u}dgen}, {\em Unbounded self-adjoint operators on {H}ilbert
  space}, vol.~265 of Graduate Texts in Mathematics, Springer, Dordrecht, 2012.

\end{thebibliography}
\bibliographystyle{siam}

\end{document}